\numberwithin{equation}{section} \theoremstyle{plain}
\newtheorem{theorem}{Theorem}[section]
\newtheorem{lemma}{Lemma}[section]
\newtheorem{corollary}{Corollary}[section]
\newtheorem{proposition}{Proposition}[section]
\newtheorem{remark}{Remark}[section]
\def\bC{\mathbb C}
\def\bE{\mathbb E}
\def\bN{\mathbb N}
\def\cC{\mathcal C}
\def\cI{\mathcal I}
\def\cJ{\mathcal J}
\def\y{\mathbf y}
\def\bi{\mathbf i}
\def\j{\mathbf j}
\def\Tr{\mathrm {Tr}}
\def\Var{\mathrm {Var}}
\def\deg{\mathrm {deg}}
\begin{document}
	
\title{On spectral distribution of sample covariance matrices from large dimensional and large $k$-fold tensor products}
\date{\today}
\author{Beno\^{\i}t Collins\thanks{Department of Mathematics, Kyoto University. E-mail: collins@math.kyoto-u.ac.jp} \and Jianfeng Yao\thanks{School of Data Science, The Chinese University of Hong Kong (Shenzhen). E-mail: jeffyao@cuhk.edu.cn} \and Wangjun Yuan\thanks{Department of Mathematics and Statistics, University of Ottawa. E-mail: ywangjun@connect.hku.hk}}
\maketitle

\begin{abstract}
    We study the eigenvalue distributions for sums of independent rank-one $k$-fold tensor products of large $n$-dimensional vectors.  Previous results in the literature assume that   $k=o(n)$ and show that the eigenvalue distributions converge to  the celebrated Mar\v{c}enko-Pastur law under appropriate moment conditions on the base vectors.  In this paper, motivated by 
    quantum information theory, we study the regime where
    $k$ grows faster, namely $k=O(n)$.
We show that the moment sequences of the eigenvalue distributions have a limit, which is different from the  Mar\v{c}enko-Pastur law. As a byproduct,  we show that the    Mar\v{c}enko-Pastur law limit holds if and only if $k=o(n)$ for this tensor model. The approach is based on the method of moments. 
\end{abstract}

\noindent{\bf AMS 2000 subject classifications:}\quad Primary 60B20; Secondary 15B52.

\medskip 

\noindent{\bf Keywords and phrases:}\quad   Large $k$-fold tensors; Eigenvalue distribution; Mar\v{c}enko-Pastur law; Quantum information theory. 

\section{Introduction}
\label{sec:intro}

For $n \in \bN$, let $\{\xi_1, \ldots, \xi_n\}$ be a family of i.i.d. centered complex random variables with unit variance. Denote $\y = \frac{1}{\sqrt{n}} (\xi_1, \ldots, \xi_n) \in \bC^n$. Let $\{\y_{\alpha}^{(l)}: 1 \le \alpha \le m, 1 \le l \le k\}$ be a family of i.i.d. copies of $\y$, and let $Y_{\alpha} = \y_{\alpha}^{(1)} \otimes \cdots \otimes \y_{\alpha}^{(k)}$ for $1 \le \alpha \le m$. Let $\{\tau_1, \tau_2, \ldots\}$ be a sequence of real numbers, and $Y = (Y_1, \ldots, Y_m)$ be a $n^k \times m$ matrix. Consider the $n^k \times n^k$ Hermitian matrix
\begin{align}\label{eq:matrix}
	M_{n,k,m} = \sum_{\alpha=1}^m \tau_{\alpha} Y_{\alpha} Y_{\alpha}^*.
\end{align}
Therefore, each $n^k$-dimensional vector $Y_\alpha$ is a $k$-fold tensor product of $n$-dimensional i.i.d. vectors, and $M_{n,k,m}$ is a sum of $m$ independent rank-$1$ Hermitian matrices of dimension $n^k$.

The simplest case of $k=1$ and $\tau_{\alpha} \equiv 1$ was studied in the seminal paper \cite{MP67} where the celebrated Mar\v{c}enko-Pastur law was derived under appropriate moment conditions on the entries of the base vector $\mathbf{y}$ and the limiting scheme where $n\to\infty$ and $m/n \to c>0$. Many subsequent improvements on the Mar\v{c}enko-Pastur law appeared in the literature, including \cite{Silv95}, \cite{BaiZhou08} and \cite{Pajor09}.
These papers were able to deal with the model \eqref{eq:matrix} with general sequence $\{\tau_{\alpha}: \alpha \in \bN_+\}$.
Notably, the latter paper extended the law to a broad family of $\mathbf{y}$-vectors, called {\em good vectors}, that includes the current setting with i.i.d. components.
Later, for the setting $\tau_{\alpha} \equiv 1$, the necessary and sufficient conditions on $Y_1$ that the Mar\v{c}enko-Pastur law serves as the limiting spectral distribution (LSD) of $M_{n,1,m}$ were carried out in \cite{Yaskov2016}. 

Recently, \cite{Lytova2018} considered the $k$-fold tensor model $M_{n,k,m}$ and established a LSD for its $n^k$ real-valued eigenvalues.
For the special case $\tau_{\alpha} \equiv 1$, the LSD is exactly the Mar\v{c}enko-Pastur law.
A central limit theorem (CLT) is also established for a class of linear spectral statistics following the approach of \cite{LytovaPastur09}.
The main setup \cite{Lytova2018} is that the tensor product number $k$ must be small enough compared to the space dimension $n$.
Precisely, $k/n \to 0$ is required for the validity of the LSD while $k\equiv 2$ is required when $n \to \infty$ for the validity of the CLT.
It is natural to consider the setting where $k$ is large.
Noting that large $k$ means more dependence among the entries of the vector $Y_1$, it is expected that the LSD of $M_{n,k,m}$ does not obey the Mar\v{c}enko-Pastur law when $k$ is large enough.
However, the martingale moment bound employed \cite{Lytova2018} is less useful when $k$ is large, and the method cannot be directly extended to the case $k = O(n)$.
The present paper deals with the case $k = O(n)$ for the model \eqref{eq:matrix} and shows that the empirical spectral distribution (ESD) of $M_{n,k,m}$ with $\tau_{\alpha} \equiv 1$ converges to the Mar\v{c}enko-Pastur law if and only if $k = o(n)$.
Therefore, the matrix $M_{n,k,m}$ with $k = O(n)$ can be seen as a new example of \emph{bad vectors}, for which the necessary and sufficient condition in \cite{Yaskov2016} does not hold.

Another motivation for studying the model \eqref{eq:matrix} is from quantum information theory.
In \cite{Hastings2012}, the model $M_{n,k,m}$ was introduced as a quantum analog of the classical probability problem of allocating $r$ balls into $s$ bins.
The random vector $Y_1$ was interpreted as random product states in $(\bC^n)^{\otimes k}$.
When $k$ is fixed and $m = cn^k$ for some $c>0$, \cite{Hastings2012} established the convergence in expectation of the normalized trace of moments $n^{-k} \Tr M_{n,k,m}^p$, which coincide with the corresponding moments of the Mar\v{c}enko-Pastur law.
In quantum physics and quantum information theory, it is natural to investigate the behavior of a large number of quantum states.
In \cite{Collins2011}, the quantum entanglement of structured random states was characterized, and the spectral density of the reduced state was studied when the number of the quantum states $k$ is large.
In \cite{Collins2013}, the asymptotic behavior of the average entropy of entanglement for elements of an ensemble of random states associated with a graph was studied when the dimension of the quantum subsystem is large.

This paper considers the scenario where $k$ grows to infinity quickly. Namely, we assume that there exist constants $c, d \in (0, \infty)$, such that
\begin{align} \label{eq-def-ratio}
	\dfrac{k}{n} \to d, \quad \dfrac{m}{n^k} \to c.
\end{align}
Let us add one more word of motivation for the choice of regime $k = O(n)$. In hindsight, this is a natural threshold in our quest for limiting theorems, and from the point of view of probability theory, our result extends existing results. 
However, the need for large $k$ is real in Quantum Information Theory. Actually, a typical real world scenario would be $n$ fixed (the dimension of the system and $k$ would go to infinity -- the system can be used many times, i.e., one works on regularized quantities.
In practice, we do not know how to fix $n$ and send $k$ to infinity, so we take this scenario as our next model.

Our approach is based on the method of moments. Under appropriate moment conditions on the base variable $\xi_1$ and the sequence of coefficients $\{\tau_\alpha\}$, we derive the limits for the spectral moments of $M_{n,k,m}$ under the limiting scheme~\eqref{eq-def-ratio}. A striking fact from our result is that contrary to all the previous results, \cite{Lytova2018}, the limiting spectral moments found here involve the $4$th moment of the base variable $\xi_1$.

\section{Almost sure convergence of spectral moments}
\label{sec:moment}

For fixed $\alpha = (\alpha_1, \ldots, \alpha_p)$,
for each value $t$ appearing in $\alpha$,
we count its frequency by
\begin{align}\label{eq:deg_t}
  \deg_t(\alpha) = \left|\{j \in [p]: \alpha_j = t \}\right|.
\end{align}

The main result of the paper is the following theorem.
\begin{theorem}\label{th:moment}

Assume $d>0$ and the following moment conditions hold.
\begin{enumerate}
\item For all $p \in \bN$. $p$-th moment $m_p=E [|\xi_1|^p] < \infty$.
\item
  For all $q \in \bN$,
  \begin{align} \label{eq-limit of tau}
	\dfrac{1}{m} \sum_{j=1}^m \tau_j^q \to m_q^{(\tau)}, \ m \to \infty.
  \end{align}
\end{enumerate}
We have
\begin{enumerate}
	\item[(i)]
	\begin{align} \label{eq-d>0-limit moment}
		\lim_{n \to \infty} \dfrac{1}{n^k} \bE \big[ \Tr M_{n,k,m}^p \big]
		=& \sum_{s=1}^p c^s \sum_{\alpha \in \cC_{s,p}^{(1)}} \left( \prod_{t=1}^s m_{\deg_t(\alpha)}^{(\tau)} \right) \exp \left( d(m_4-1) \sum_{t=1}^s \binom{\deg_t(\alpha)}{2} \right).
	\end{align}
	Here $\cC_{s,p}^{(1)}$ is a special class of graphs that will be defined in the course of the proof (see Lemma~\ref{lemma-Bai}).
	\item[(ii)]
	For all fixed $p \in \bN_+$, if $k \ge 2$, we have
	\begin{align*}
		\sum_{n=1}^{\infty} \Var \left( \dfrac{1}{n^k} \Tr M_{n,k,m}^p \right) < \infty.
	\end{align*}
\end{enumerate}
In particular, (i) and (ii) imply that $n^{-k} \Tr M_{n,k,m}^p$ converge almost surely to the limit given in the r.h.s. of \eqref{eq-d>0-limit moment}.
\end{theorem}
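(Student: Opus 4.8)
The plan is to compute $\frac{1}{n^k}\bE[\Tr M_{n,k,m}^p]$ directly by expanding the trace, and then to handle the variance separately by a similar but cruder expansion. Writing $M_{n,k,m} = \sum_{\alpha} \tau_\alpha Y_\alpha Y_\alpha^*$, we expand
\begin{align*}
\frac{1}{n^k}\bE[\Tr M_{n,k,m}^p] = \frac{1}{n^k}\sum_{\alpha_1,\ldots,\alpha_p=1}^m \tau_{\alpha_1}\cdots\tau_{\alpha_p}\, \bE\big[(Y_{\alpha_1}^*Y_{\alpha_2})(Y_{\alpha_2}^*Y_{\alpha_3})\cdots(Y_{\alpha_p}^*Y_{\alpha_1})\big].
\end{align*}
Because each $Y_\alpha = \y_\alpha^{(1)}\otimes\cdots\otimes\y_\alpha^{(k)}$ factorizes across the $k$ tensor slots, each inner product $Y_{\alpha_j}^*Y_{\alpha_{j+1}}$ factorizes as $\prod_{l=1}^k \langle \y_{\alpha_j}^{(l)}, \y_{\alpha_{j+1}}^{(l)}\rangle$, so the expectation over the random vectors factorizes over the $k$ independent ``copies'' $l=1,\ldots,k$. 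For a fixed index pattern $\alpha=(\alpha_1,\ldots,\alpha_p)$, each slot contributes the same factor, and that factor is a moment of a cyclic product of inner products of i.i.d. $n$-vectors — exactly the object governing the ordinary ($k=1$) sample covariance computation. I would group the index patterns $\alpha$ by the partition of $[p]$ they induce (equivalently by which $\alpha_j$ are equal), so that $s$ denotes the number of distinct values; the number of distinct values that actually survive and the combinatorial structure of the surviving patterns is what the class $\cC_{s,p}^{(1)}$ from Lemma~\ref{lemma-Bai} records. The factor $c^s$ comes from the $\binom{m}{s}\sim m^s/s! \sim c^s n^{ks}$ ways of choosing the distinct $\alpha$-values against the normalization $n^{-k}$ and the $n^{-k}$ hidden in each $\|Y_\alpha\|^2$, and $\prod_t m_{\deg_t(\alpha)}^{(\tau)}$ comes from averaging the $\tau$'s using hypothesis~(2).

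The heart of the matter is the per-slot factor and the emergence of the exponential. For a single slot, one is led to evaluate an expectation over i.i.d. $n$-vectors $\{\y^{(l)}_t : t=1,\ldots,s\}$ of a product of inner products determined by the graph $\alpha$; expanding each $\y$ into coordinates and summing over coordinate-index assignments, the leading contribution counts the ways the $n$ coordinate indices can coincide. The key point is that a slot where the same vector $\y^{(l)}_t$ appears $\deg_t(\alpha)$ times produces, beyond the leading ``all coordinate indices generic'' term that reconstructs the Mar\v{c}enko--Pastur moments, correction terms in which pairs of coordinate indices collide; each such collision within the $\deg_t$-fold block contributes a factor proportional to $(m_4-1)/n$ (the $m_4$ entering precisely because colliding two factors $|\xi|^2$ to the same index yields $\bE|\xi|^4$, while the generic value is $1$). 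Since there are $k$ identical slots and $k/n\to d$, a single slot contributes $1 + \frac{m_4-1}{n}\sum_t\binom{\deg_t(\alpha)}{2} + O(n^{-2})$, and raising to the $k$-th power gives, in the limit, $\exp\!\big(d(m_4-1)\sum_t\binom{\deg_t(\alpha)}{2}\big)$. Making this rigorous — controlling all the lower-order coordinate-collision patterns uniformly so that only the ``one collision per slot, summed over slots'' terms survive in the exponent and the rest is genuinely negligible even after being raised to the power $k=O(n)$ — is the main obstacle; it is a careful graph/partition bookkeeping argument, and the moment hypothesis~(1) (all moments finite) is what keeps the many error terms under control.

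For part~(ii), the variance, I would expand $\bE\big[(\frac{1}{n^k}\Tr M^p)^2\big] - \big(\bE\frac{1}{n^k}\Tr M^p\big)^2$ as a double sum over two index patterns $\alpha=(\alpha_1,\ldots,\alpha_p)$ and $\beta=(\beta_1,\ldots,\beta_p)$, where the variance is supported on the configurations in which the $\alpha$-values and $\beta$-values share at least one common label (otherwise the two traces are independent and the contribution cancels). Forcing at least one coincidence between the two blocks of distinct labels costs a factor $n^{-k}$ relative to the mean computation, and since $k\ge 2$ one gets $\Var(n^{-k}\Tr M^p) = O(n^{-k}) = O(n^{-2})$ when $k=2$ and is even smaller for larger $k$ — in all cases summable in $n$. (Here one also uses hypothesis~(1) to bound the $\tau$-free part uniformly; the $\tau$-part is handled by~(2) together with the crude bound $\frac1m\sum\tau_j^{2q}\le(\text{const})$ for each fixed $q$.) Finally, combining (i) and (ii) with the Borel--Cantelli lemma yields the claimed almost sure convergence: (ii) gives $\sum_n \Var(n^{-k}\Tr M^p)<\infty$, hence $n^{-k}\Tr M^p - \bE[n^{-k}\Tr M^p]\to 0$ almost surely, and (i) identifies the limit of the expectation with the right-hand side of~\eqref{eq-d>0-limit moment}.
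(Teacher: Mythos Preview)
Your proposal is correct and follows essentially the same route as the paper: expand the trace, use the tensor factorization to write the expectation as the $k$-th power of a single-slot quantity, group $\alpha$-patterns into canonical sequences, restrict to the surviving class $\cC_{s,p}^{(1)}$, and extract the exponential from $(1+O(1/n))^k$ with $k/n\to d$; the variance bound via ``at least one shared $\alpha$-label costs a factor $n^{-k}$'' plus Borel--Cantelli is also exactly what the paper does.

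The one place where your sketch understates the difficulty is the heuristic that a collision within the $\deg_t$-block contributes $(m_4-1)/n$ because ``$m_4$ versus generic value $1$''. In the paper's accounting the $O(1/n)$ correction to the single-slot factor breaks into three pieces: (a) the term $-\binom{p+1-s}{2}/n$ coming from expanding the falling factorial $n(n-1)\cdots(n-r+1)$ at $r=p+1-s$; (b) the $\Delta_3$-graphs (two $i$-vertices glued that have \emph{different} $\alpha$-neighbors, producing a cycle), each with weight $1/n$, whose count is shown in Lemma~\ref{lemma-Delta_3} to be $\binom{p+1-s}{2}-\sum_t\binom{\deg_t(\alpha)}{2}$; and (c) the $\Delta_4$-graphs (two $i$-vertices glued that share an $\alpha$-neighbor, producing a quadruple edge), each with weight $m_4/n$, whose count is $\sum_t\binom{\deg_t(\alpha)}{2}$ by Lemma~\ref{lemma-Delta_4}. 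The clean coefficient $(m_4-1)\sum_t\binom{\deg_t(\alpha)}{2}$ appears only because (a) and (b) together cancel the $\binom{p+1-s}{2}$ exactly, and establishing the $\Delta_3$ count in (b) requires the non-crossing property of sequences in $\cC_{s,p}^{(1)}$ (Lemma~\ref{lemma-abab}). This three-way cancellation is precisely the ``careful bookkeeping'' you allude to, and it is what guarantees that the per-slot factor is genuinely $1+c/n+O(n^{-2})$ with $c$ independent of $p,s$ beyond the degree profile --- which is essential since you are raising it to the power $k\asymp n$.
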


The proof of the theorem is given in Section~\ref{sec:proofs}.

\begin{remark}
Here, we require the random variable $\xi_1$ to have finite moments of any order. For most matrix models, such moment conditions can be removed by a standard truncation argument. However, the centralization step in the truncation argument fails for our matrix model. More precisely, let $\hat \y_{\alpha}^{(l)}$ be the truncated vector for all $1 \le \alpha \le m$ and $1 \le l \le k$, then we have the following identity
\begin{align} \label{eq-difference centralization}
	& \hat \y_{\alpha}^{(1)} \otimes \cdots \otimes \hat \y_{\alpha}^{(k)} - \big( \hat \y_{\alpha}^{(1)} - \bE \big[ \hat \y_{\alpha}^{(1)} \big] \big) \otimes \cdots \otimes \big( \hat \y_{\alpha}^{(k)} - \bE \big[ \hat \y_{\alpha}^{(k)} \big] \big) \nonumber \\
	=& \sum_{l=1}^k \hat \y_{\alpha}^{(1)} \otimes \cdots \otimes \hat \y_{\alpha}^{(l-1)} \otimes \bE \big[ \hat \y_{\alpha}^{(l)} \big] \otimes \big( \hat \y_{\alpha}^{(l+1)} - \bE \big[ \hat \y_{\alpha}^{(l+1)} \big] \big) \otimes \cdots \otimes \big( \hat \y_{\alpha}^{(k)} - \bE \big[ \hat \y_{\alpha}^{(k)} \big] \big).
\end{align}
Note that for $1 \le l \le k$, the $n^k \times m$ matrix, whose $\alpha$-th column is
\begin{align*}
	\hat \y_{\alpha}^{(1)} \otimes \cdots \otimes \hat \y_{\alpha}^{(l-1)} \otimes \bE \big[ \hat \y_{\alpha}^{(l)} \big] \otimes \big( \hat \y_{\alpha}^{(l+1)} - \bE \big[ \hat \y_{\alpha}^{(l+1)} \big] \big) \otimes \cdots \otimes \big( \hat \y_{\alpha}^{(k)} - \bE \big[ \hat \y_{\alpha}^{(k)} \big] \big),
\end{align*}
has rank at most $n^{k-1}$. Thus, the $n^k \times m$ matrix, whose $\alpha$-th column is \eqref{eq-difference centralization}, has rank at most $kn^{k-1}$. Hence, by \cite[Theorem A.44]{Bai2010}, the sup norm of the difference of the cumulative distribution functions in the centralization step does not exceed $k/n$, which is not negligible when $d > 0$.
\end{remark}

Let $\theta= e^{d(m_4-1)}$. The limit moments in~\eqref{eq-d>0-limit moment}, say $\gamma_p$, are  polynomial functions of  $\theta$.
The first four moments are 
\begin{align*}
	\gamma_1 & = 1,\\
	\gamma_2 & = c\theta+c^2, \\
	\gamma_3 & = c\theta^3+3c^2\theta+c^3,\\
	\gamma_4 &=  c\theta^6 + 4c^2 \theta^3 + 2c^2\theta^2 + 6c^3\theta +c^4. 
\end{align*}
However, for higher exponent $p$, some computing code is needed to find an explicit expression for $\gamma_p$.

\begin{remark}
The limiting moment sequence $(\gamma_p)$ grow to infinity extremely fast with $p$. To see this, let $\tau_\alpha \equiv 1$. Then $\gamma_p$ is lower bounded by the first term  ($s=1$) in \eqref{eq-d>0-limit moment} is $c\theta^{p(p-1)/2}$. Thus the Carleman's condition, that is, $\sum_{p=1}^\infty \gamma_{2p}^{-1/(2p)}=\infty$, is not satisfied (see also \cite{Lin2017}).
In particular, it is not clear whether the moment sequence   $(\gamma_p)$ uniquely determines a limiting distribution.   However, by the convergence of the moments, we know that the sequence of eigenvalue distributions is tight (almost surely).
\end{remark}

As a  byproduct of our moment method, we give an alternative method for deriving a limiting spectral distribution in the case of $d=0$, a result already given in \cite{Lytova2018} using the Stieltjes transform method.

\begin{proposition}\label{prop:d=0}
Assume $d=0$. We have
\begin{enumerate}
	\item[(i)]
	\begin{align} \label{eq-d=0-limit moment}
		\lim_{n \to \infty} \dfrac{1}{n^k} \bE \big[ \Tr M_{n,k,m}^p \big]
		= \sum_{s=1}^p c^s \sum_{\alpha \in \cC_{s,p}^{(1)}} \left( \prod_{t=1}^s m_{\deg_t(\alpha)}^{(\tau)} \right).
	\end{align}
	Here $\cC_{s,p}^{(1)}$ is a special class of graphs defined later in Lemma~\ref{lemma-Bai}.
	\item[(ii)]
	For all fixed $p \in \bN_+$, if $k \ge 2$, we have
	\begin{align*}
		\sum_{n=1}^{\infty} \Var \left( \dfrac{1}{n^k} \Tr M_{n,k,m}^p \right) < \infty.
	\end{align*}
\end{enumerate}
In particular, (i) and (ii) imply that $n^{-k} \Tr M_{n,k,m}^p$ converge almost surely to the limit given in the r.h.s. of \eqref{eq-d=0-limit moment}.
\end{proposition}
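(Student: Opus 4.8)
The plan is to recover this as the $d \to 0$ degeneration of the machinery developed for Theorem~\ref{th:moment}, rather than re-running the whole combinatorial analysis. Observe that the right-hand side of \eqref{eq-d=0-limit moment} is obtained from \eqref{eq-d>0-limit moment} by setting $\theta = e^{d(m_4-1)} = 1$, i.e.\ by dropping the exponential factor. So the first step is to re-examine the proof of Theorem~\ref{th:moment}(i) and isolate exactly where the factor $\exp\!\big(d(m_4-1)\sum_t \binom{\deg_t(\alpha)}{2}\big)$ enters. In that proof one expands $n^{-k}\bE[\Tr M_{n,k,m}^p]$ as a sum over tuples $\alpha \in [m]^p$ and over index configurations on the $k$ tensor slots; the leading contribution comes from the graph class $\cC_{s,p}^{(1)}$, and for each such $\alpha$ with $s$ distinct values one gets a product $\prod_{t=1}^s m_{\deg_t(\alpha)}^{(\tau)}$ from the $\tau$-coefficients (via \eqref{eq-limit of tau}) together with a slot-counting factor. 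That slot-counting factor is, for each repeated value $t$ appearing $\deg_t(\alpha)$ times, a sum over how the $\binom{\deg_t(\alpha)}{2}$ pairs of repeated columns can ``coincide'' on each of the $k$ coordinates, and it is precisely this quantity that converges to $\exp\!\big(d(m_4-1)\sum_t\binom{\deg_t(\alpha)}{2}\big)$ under $k/n \to d$. When $d = 0$ (so $k = o(n)$), the same quantity converges to $1$.

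Concretely, the key step is the limit evaluation of the slot factor. In the $d>0$ proof one shows that a term like $\big(1 + \tfrac{m_4-1}{n} + o(1/n)\big)^{k}$ — arising because each of the $k$ independent coordinates contributes a factor $1$ (coordinates that do not ``pair up'') plus a correction of order $1/n$ weighted by $m_4 - 1$ (the normalized fourth moment correction from $\bE|\xi_1|^4 = m_4$), per pair of repeated columns — converges to $e^{d(m_4-1)}$ raised to the number of pairs. When $d = 0$, $\big(1 + O(1/n)\big)^{k} \to 1$ since $k = o(n)$, so every such slot factor tends to $1$ and the sum collapses to \eqref{eq-d=0-limit moment}. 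One must also check that the error terms (from non-leading graphs, from the $o(1/n)$ remainders, and from replacing the exact moments $m_q$ by their limits) remain negligible uniformly; this is lighter here than in the $d>0$ case because there is no accumulation over $\sim dn$ coordinates. For part (ii), the variance bound $\sum_n \Var(n^{-k}\Tr M_{n,k,m}^p) < \infty$ for $k \ge 2$, one simply invokes the identical estimate proved for Theorem~\ref{th:moment}(ii): that argument (pairing two independent copies of the trace expansion and showing the connected contribution is $O(n^{-2k})$ or better, summably, using $k \ge 2$) does not use $d > 0$ anywhere, so it transfers verbatim. Finally, almost sure convergence follows from (i), (ii) and the Borel–Cantelli lemma together with the first Borel–Cantelli / Chebyshev argument, exactly as stated after \eqref{eq-d=0-limit moment}.

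The main obstacle is bookkeeping rather than conceptual: one has to make sure that in the $d>0$ proof the dependence on $k$ is tracked cleanly enough that the substitution $d = 0$ is legitimate — in particular that the $o(1/n)$ corrections inside the per-coordinate factor are genuinely $o(1/n)$ with a constant independent of $k$, so that raising to the $k$-th power still yields $1 + o(1)$ when $k = o(n)$. If the proof of Theorem~\ref{th:moment} is written so that these uniformities are already in place (which the statement of Theorem~\ref{th:moment} suggests, since it handles all $d \ge 0$ in a unified expansion), then Proposition~\ref{prop:d=0} is essentially a corollary obtained by taking $\theta = 1$. I would therefore present the proof as: ``Following the proof of Theorem~\ref{th:moment} verbatim up to the point where the slot factor is evaluated, and noting that for $k = o(n)$ this factor tends to $1$ rather than to $\theta^{\,\sum_t \binom{\deg_t(\alpha)}{2}}$, we obtain \eqref{eq-d=0-limit moment}; part (ii) and the almost sure convergence are proved exactly as in Theorem~\ref{th:moment}.''
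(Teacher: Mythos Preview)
Your proposal is correct and matches the paper's approach: the paper reuses the moment expansion \eqref{eq-moment-0.4} from the proof of Theorem~\ref{th:moment}, observes that the per-slot factor is $n^{1-s}(1+O(n^{-1}))$ for $\alpha\in\cC_{s,p}^{(1)}$ and $O(n^{-s})$ otherwise, and then uses $k=o(n)$ to conclude $(1+O(n^{-1}))^k=1+o(1)$, so the exponential factor collapses to $1$; part~(ii) is declared identical to Section~\ref{sec:part(ii)}. One small simplification in the paper relative to your sketch: because the slot factor tends to $1$ anyway, the paper does not invoke the precise $\Delta_3$/$\Delta_4$ counts (Lemmas~\ref{lemma-Delta_4} and~\ref{lemma-Delta_3}) at all in the $d=0$ case---it just lumps all non-$\Delta_1$, non-$\Delta_2$ graphs into a single $O(n^{-s})$ error---so you can shorten your argument accordingly rather than following Theorem~\ref{th:moment} fully verbatim.
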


The proof of the proposition is given in Appendix~\ref{sec:d=0}.

Finally, comparing the two cases $d>0$ and $d=0$, it is worth noticing that the fourth moment $m_4$ contributes to the limiting spectral moments only in the case of  $d>0$  (Theorem~\ref{th:moment}).

\section{Proof of Theorem~\ref{th:moment}}
\label{sec:proofs}

Before proceeding to the proof, we introduce some preliminaries on graph combinatorics.
We first introduce some terminologies and notations from graph theory. Denote $[m]$ by the set of integers in the closed interval $[1,m]$. We call $\alpha=(\alpha_1, \ldots, \alpha_p) \in [m]^p$ a sequence of length $p$ with vertices $\alpha_j$ for $1 \le j \le p$. We denote by $|\alpha|$ the number of distinct elements in $\alpha$. If $s = |\alpha|$, then we call $\alpha$ an $s$-sequence. Two sequence are equivalent if one becomes the other by a suitable permutation on $[m]$. The sequence $\alpha$ is canonical if $\alpha_1=1$ and $\alpha_u \le \max\{\alpha_1, \ldots, \alpha_{u-1}\} + 1$ for $u \ge 2$. We denote by $\cC_{s,p}$ the set of all canonical $s$-sequences of length $p$, and denote $\cJ_{s,p}(m)$ by the set of all $s$-sequences $\alpha \in [m]^p$. Then
\begin{align} \label{eq-0.3}
	[m]^p = \bigcup_{s=1}^p \cJ_{s,p}(m).
\end{align}
From the definition above, one can see that the set of distinct vertices of a canonical $s$-sequence is $[s]$. Denote by $\cI_{s,m}$ the set of injective maps from $[s]$ to $[m]$. For a canonical $s$-sequence $\alpha$ and a map $\varphi \in \cI_{s,m}$, we call $\varphi(\alpha)$ the $s$-sequence $(\varphi(\alpha_1), \ldots, \varphi(\alpha_p))$. For each canonical $s$-sequence, its image under the maps in $\cI_{s,m}$ gives all its equivalent sequence, and hence its equivalent class of sequence in $[m]^p$ has exactly $m(m-1) \cdots (m-s+1)$ distinct elements.

Let $\alpha = (\alpha_1, \ldots, \alpha_p) \in [m]^p$ be a fixed canonical $s$-sequence. For $i_1 = (i_1^{(1)}, \ldots, i_1^{(p)}) \in [n]^p$, draw two parallel lines, referring to the $\alpha$-line and the $i$-line. Plot $i_1^{(1)}, \ldots, i_1^{(p)}$ on the $i$-line and $\alpha_1, \ldots, \alpha_p$ on the $\alpha$-line. Draw $p$ down edges from $\alpha_u$ to $i_1^{(u)}$ and $p$ up edges from $i_1^{(u)}$ to $\alpha_{u+1}$ for $1 \le u \le p$. Here, we use the convention that $\alpha_1 = \alpha_{p+1}$. We call a down edge from $\alpha_u$ to $i_1^{(u)}$ a down innovation if $i_1^{(u)}$ is different from $i_1^{(1)}, \ldots, i_1^{(u-1)}$. We also call an up edge from $i_1^{(u)}$ to $\alpha_{u+1}$ an up innovation if $\alpha_{u+1}$ is different from $\alpha_1, \ldots, \alpha_u$. We denote the graph by $g(i_1,\alpha)$ and call such graph a $\Delta(p;\alpha)$-graph. Two graphs $g(i_1,\alpha)$ and $g(i_1',\alpha)$ are said to be equivalent if the two sequences $i_1$ and $i_1'$ are equivalent. We write $g(i_1,\alpha) \sim g(i_1',\alpha)$ if they are equivalent. For each equivalent class, we choose the canonical graph such that $i_1 = (i_1^{(1)}, \ldots, i_1^{(p)}) \in [n]^p$ is a canonical $r$-sequence for some $r \in \bN_+$. A canonical $\Delta(p;\alpha)$-graph is denoted by $\Delta(p,r,s;\alpha)$ if it has $r$ noncoincident $i$-vertices and $s$ noncoincident $\alpha$-vertices. We classify $\Delta(p,r,s;\alpha)$-graphs into the following five categories.
\begin{enumerate}
	\item[] $\Delta_1(p,s;\alpha)$. $\Delta(p;\alpha)$-graphs in which each down edge must coincide with one and only one up edge. If we glue the coincident edges, the resulting graph is a tree of $p$ edges and $p+1$ vertices, which implies $r+s=p+1$.
	
	\item[] $\Delta_2(p,r,s;\alpha)$. $\Delta(p;\alpha)$-graphs that contain at least one single edge.
	
	\item[] $\Delta_3(p,s;\alpha)$. $\Delta(p;\alpha)$-graphs such that the number of the edges between two vertices is $0$ or $2$. If we glue the coincident edges, the resulting graph is a connected graph with exactly one cycle. The graph has $p$ edges and $p$ vertices, which implies $r+s=p$.
	
	\item[] $\Delta_4(p,s;\alpha)$. $\Delta(p;\alpha)$-graphs that have two up edges and two down edges with the same endpoints and all other down edges coincide with one and only one up edge. If we glue the coincident edges, the resulting graph is a tree of $p-1$ edges and $p$ vertices, implying $r+s=p$.
	
	\item[] $\Delta_5(p,r,s;\alpha)$. $\Delta(p;\alpha)$-graphs that do not belong to the categories above. In this case, the graph has at most $p-1$ vertices, since the in-degree equals to out-degree and at least $2$ for all vertices. Thus, $r+s \le p-1$.
\end{enumerate}

Next, we determine the number of sequence $i_1 \in \cC_{p+1-s,p}$  such that $g(i_1,\alpha) \in \Delta_1(p,s;\alpha)$ for given sequence $\alpha \in \cC_{s,p}$. We have the following lemma, which is motivated by \cite[Lemma 3.4]{Bai2010}.

\begin{lemma} \label{lemma-Bai}
	For any sequence $\alpha \in \cC_{s,p}$, there is at most one sequence $i_1 \in \cC_{p+1-s,p}$ such that $g(i_1,\alpha) \in \Delta_1(p,s;\alpha)$. We denote by $\cC_{s,p}^{(1)}$ the set of such canonical sequences $\alpha$. Then the number of the elements in $\cC_{s,p}^{(1)}$ is
	\begin{align*}
		\dfrac{1}{p} \binom{p}{s-1} \binom{p}{s}.
	\end{align*}
\end{lemma}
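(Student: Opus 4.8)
The plan is to recognize the graphs $\Delta_1(p,s;\alpha)$ as exactly the configurations underlying the classical moment computation for the Marčenko–Pastur law, so that counting the canonical $\alpha$-sequences that admit a compatible $i_1$ reduces to a known Narayana-type count. Concretely, I would first argue the uniqueness claim: given a canonical $s$-sequence $\alpha=(\alpha_1,\dots,\alpha_p)$, a graph $g(i_1,\alpha)\in\Delta_1(p,s;\alpha)$ has the property that gluing coincident edges yields a tree on $p+1$ vertices. Traverse the closed walk $\alpha_1\to i_1^{(1)}\to\alpha_2\to i_1^{(2)}\to\cdots\to\alpha_p\to i_1^{(p)}\to\alpha_1$; since the glued graph is a tree with $p$ edges traversed $2p$ times by this length-$2p$ closed walk, every edge is used exactly twice, once in each direction. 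This forces, at each step $u$, the value $i_1^{(u)}$ to be \emph{determined}: either the down edge $\alpha_u\to i_1^{(u)}$ is an innovation (a new tree edge, and then $i_1^{(u)}$ is the next unused label, which is forced in the canonical normalization), or it is the reverse of a previously-traversed edge, in which case $i_1^{(u)}$ is the unique vertex on the other side of that tree edge from $\alpha_u$. Hence $i_1$ is reconstructed step by step with no freedom, giving at most one canonical $i_1$; this is the analogue of \cite[Lemma 3.4]{Bai2010}.

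Next I would characterize $\cC_{s,p}^{(1)}$, the set of $\alpha$ for which this reconstruction actually succeeds (i.e. the forced choices are consistent and produce a genuine $\Delta_1$ graph with $r=p+1-s$ distinct $i$-vertices). The key observation is that a $\Delta_1$ graph, after gluing, is a plane tree whose $p+1$ vertices are two-colored ($\alpha$-vertices vs. $i$-vertices) alternately along every edge, with $s$ vertices of $\alpha$-color and $p+1-s$ of $i$-color, and the closed walk is precisely the contour walk of this plane tree rooted at $\alpha_1$. Counting such objects is standard: the number of plane trees on $p$ edges with a prescribed number of vertices at even vs. odd depth is given by the Narayana numbers, and the generating correspondence shows the count of admissible $\alpha$ is $\frac1p\binom{p}{s-1}\binom{p}{s}$. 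The cleanest route is a bijection between these configurations and pairs of lattice paths, or equivalently Dyck paths of length $2p$ with exactly $s$ peaks (or $s-1$, depending on the rooting convention), whose number is the Narayana number $N(p,s)=\frac1p\binom{p}{s-1}\binom{p}{s}$.

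I would carry out the argument in this order: (1) set up the closed-walk/contour description and prove each edge is traversed exactly twice in opposite directions; (2) deduce the forced reconstruction of $i_1$ and hence uniqueness; (3) identify the admissible $\alpha$'s with rooted plane trees having $s$ vertices of one color, equivalently with a Narayana-counted family of Dyck paths; (4) invoke the Narayana number formula to conclude $|\cC_{s,p}^{(1)}|=\frac1p\binom{p}{s-1}\binom{p}{s}$. The main obstacle I anticipate is step (3): one must be careful that the two-coloring constraint ($s$ vertices on the $\alpha$-side, and crucially $\alpha_1$ is an $\alpha$-vertex so the root color is fixed) matches the Narayana statistic with the correct shift, and that every plane tree with the right color counts is actually realized by some canonical $\alpha\in\cC_{s,p}$ — i.e., that the map from admissible $\alpha$ to colored plane trees is a bijection and not merely an injection. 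Verifying surjectivity amounts to checking that, starting from any such colored plane tree, walking its contour and reading off the $\alpha$-vertices in order of first appearance produces a canonical sequence whose forced $i_1$-reconstruction closes up correctly; this is routine but is where the bookkeeping is most delicate.
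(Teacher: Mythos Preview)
Your proposal is correct and follows essentially the same strategy as the paper: both arguments trace back to the Bai--Silverstein framework (\cite[Lemmas 3.4--3.5]{Bai2010}), proving uniqueness by reconstructing the walk step by step on the underlying tree and then obtaining the Narayana count $\frac{1}{p}\binom{p}{s-1}\binom{p}{s}$. The only difference is presentational: the paper makes the reconstruction explicit via two characteristic sequences $(u_l)$ and $(d_l)$ encoding up/down innovations, derives the ballot-type constraint $u_1+\cdots+u_{l-1}+d_2+\cdots+d_l\ge 0$, and carries out a four-case induction, whereas you phrase the same mechanism in the language of contour walks on plane trees---in particular, your assertion that a non-innovation down edge reverses ``that tree edge'' is exactly what the paper's case analysis (Cases 3--4) justifies by showing the reversed edge must be the unique up-innovation into $\alpha_l$.
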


\begin{proof}
	We denote
	\begin{align*}
		\Delta_1(p,s) = \bigcup_{\alpha \in \cC_{s,p}} \Delta_1(p,s;\alpha).
	\end{align*}
	We define a pair of characteristic sequences $\{u_1, \ldots, u_p\}$ and $\{d_1, \ldots, d_p\}$ by
	\begin{align} \label{eq-def-u sequence}
		u_l =
		\begin{cases}
			1, & \alpha_{l+1} = \max \{ \alpha_1, \ldots, \alpha_l \} +1, \\
			0, & \mathrm{otherwise},
		\end{cases}
	\end{align}
	and
	\begin{align} \label{eq-def-d sequence}
		d_l =
		\begin{cases}
			-1, & \alpha_l \notin \{1, \alpha_{l+1}, \ldots, \alpha_p\}, \\
			0, & \mathrm{otherwise}.
		\end{cases}
	\end{align}
	By definition, we always have $u_p=0$, and since $\alpha_1 = 1$, we always have $d_1=0$.
	
	For a graph belongs to $\Delta_1(p,s)$, there are exactly $s-1$ up innovations and hence there are $s-1$ $u$-variables equal to $1$ and $s-1$ $d$-variables equal to $-1$. From its definition, one sees that $d_l=-1$ means that after plotting the $l$-th down edge $(\alpha_l, i_1^{(l)})$, the future path will never revisit the vertices $\alpha_l$, which means that the edge $(\alpha_l, i_1^{(l)})$ must coincide with the up innovation leading to the vertex $\alpha_l$. Since there are $r = p+1-s$ down innovations to lead out the $r$ $i_1$-vertices, $d_l = 0$ implies that the edge $(\alpha_l, i_1^{(l)})$ must be a down innovation. Therefore, $d_l=-1$ must follow a $u_j=1$ for some $j<l$, which leads to the restriction of the pair of characteristic sequences
	\begin{align} \label{eq-restriction-characteristic sequence}
		u_1 + \cdots + u_{l-1} + d_2 + \cdots + d_l \ge 0, \qquad 2 \le l \le p.
	\end{align}
	
	Next, we show that each pair of characteristic sequences satisfying \eqref{eq-restriction-characteristic sequence} defines a graph in $\Delta_1(p,s)$ uniquely.
	
	Firstly, we have $i_1^{(1)} = 1$. Besides, $\alpha_2 = 1$ if $u_1=0$ and $\alpha_2 = 2$ if $u_1=1$. We use induction to determine the unique graph in $\Delta_1(p,s)$. Suppose that the first $l$ pairs of the down and up edges are uniquely determined by the two sequences $\{u_1, \ldots u_l\}$ and $\{d_1, \ldots d_l\}$, and the subgraph of the first $l$ pairs of down and up edges satisfies the following properties
	\begin{enumerate}
		\item[(a1)] The subgraph is connected, and the unidirectional noncoincident edges form a tree.
		\item[(a2)] If $\alpha_{l+1} = 1$, then each down edge coincides with an up edge, which means that the subgraph not have single	innovations.
		\item[(a3)] If $\alpha_{l+1} \neq 1$, then from the vertex $\alpha_1$ to $\alpha_{l+1}$, there is only one path (chain without cycles) of down-up-down-up single innovations and all other down edges coincide with an up edge.
	\end{enumerate}
	We consider the following four cases to determine the $(l+1)$-th pair of down and up edges.
	\begin{enumerate}
		\item[Case 1.] $d_{l+1}=0$ and $u_{l+1}=1$. Then both edges of the $(l+1)$-th pair are	innovations, which implies that $i_1^{(l+1)} = |\{j \le l+1: d_j=0\}|$ and $\alpha_{l+2} = 1 + |\{j \le l+1: u_j=1\}|$. After adding the two edges, the subgraph with the first $l+1$ pairs of down and up edges satisfies the properties (a1) - (a3).
		
		\item[Case 2.] $d_{l+1}=0$ and $u_{l+1}=0$. Then the down edge $(\alpha_{l+1}, i_1^{(l+1)})$ is an innovation so $i_1^{(l+1)} = |\{j \le l+1: d_j=0\}|$. The up edge $(i_1^{(l+1)}, \alpha_{l+2})$ is not an innovation so $\alpha_{l+2}$ coincides with some vertex $\alpha_j$ for $1 \le j \le l+1$. If $\alpha_j \neq \alpha_{l+1}$, then there is a path $i_1^{(l)} \to \alpha_{l+1} \to i_1^{(l+1)} \to \alpha_j$. Also note that there should be a path connecting $\alpha_j$ and $i_1^{(l)}$ in the subgraph of first $l$ pairs of down and up edges. The two paths with undirectional noncoincident edges will lead to a circle, which is a contradiction. Hence, $\alpha_{l+2} = \alpha_j = \alpha_{l+1}$ and the $(l+1)$-th up edge coincide with the $(l+1)$-th down edge. After adding the two edges, the subgraph with the first $l+1$ pairs of down and up edges satisfies the properties (a1) - (a3).
		
		\item[Case 3.] $d_{l+1}=-1$ and $u_{l+1}=1$. By \eqref{eq-restriction-characteristic sequence}, we have
		\begin{align*}
			u_1 + \cdots + u_l + d_2 + \cdots + d_l \ge 1.
		\end{align*}
		Hence, there is at least one vertex in $\{\alpha_2, \ldots, \alpha_{l+1}\}$ which is not $\alpha_1$, such that the vertex will be visited in the last $p-(l+1)$ pairs of down and up edges. Thus, by property (a1) and (a2), we have $\alpha_{l+1} \neq \alpha_1$. Hence, there must be a single up innovation leading to the vertex $\alpha_{l+1}$. We denote the up innovation by $(i_1^{(j)}, \alpha_{l+1})$ for some $1 \le j \le l$. Hence, we choose $i_1^{(l+1)} = i_1^{(j)}$, which means that the down edge starting from $\alpha_{l+1}$ coincides with the up innovation leading to $\alpha_{l+1}$. Besides, the $(l+1)$-th up edge is an innovation, which starts from $i_1^{(l+1)}$ and ends in $\alpha_{l+2} = 1 + |\{j \le l+1: u_j=1\}|$. After adding the two edges, the subgraph with the first $l+1$ pairs of down and up edges satisfies the properties (a1) - (a3).
		
		\item[Case 4.] $d_{l+1}=-1$ and $u_{l+1}=0$. As discussed in Case 3, the $(l+1)$-th down edge coincides with the only up innovation ended at $\alpha_{l+1}$. Before this up innovation, there must be a single down innovation by property (a3). Then the up edge can be drawn to coincide with this down innovation. If the path of single innovations of the subgraph with first $l$ pairs of down and up edges has only one pair of down-up innovations, then $\alpha_{l+2} = 1$, and hence the subgraph with first $l+1$ pairs of down and up edges has no single innovations, which implies that the properties (a1) and (a2) hold. Otherwise, $\alpha_{l+2} \neq 1$ and the subgraph with first $l+1$ pairs of down and up edges satisfies properties (a1) and (a3).
	\end{enumerate}
	
	By induction, it is shown that two characteristic sequences $\{u_1, \ldots, u_p\}$ and $\{d_1, \ldots, d_p\}$ satisfying \eqref{eq-restriction-characteristic sequence} determine a graph in $\Delta_1(p,s)$ uniquely.
	
	For a given sequence $\alpha = (\alpha_1, \ldots, \alpha_p) \in \cC_{s,p}$, the two characteristic sequences are uniquely determined by \eqref{eq-def-u sequence} and \eqref{eq-def-d sequence}. This shows that there is at most one sequence $i_1 \in \cC_{p+1-s,p}$ such that $g(i_1,\alpha) \in \Delta_1(p,s;\alpha)$. More precisely, there is exactly one sequence $i_1 \in \cC_{p+1-s,p}$ such that $g(i_1,\alpha) \in \Delta_1(p,s;\alpha)$ if and only if the pair of two characteristic sequences satisfies \eqref{eq-restriction-characteristic sequence}.
	
	Moreover, by \cite[Lemma 3.5]{Bai2010}, the number of the elements in $\cC_{s,p}^{(1)}$ equals to the number of graphs in
	\begin{align*}
		\bigcup_{\alpha \in \cC_{s,p}} \Delta_1(p,s;\alpha),
	\end{align*}
	which is
	\begin{align*}
		\dfrac{1}{s} \binom{p}{s-1} \binom{p-1}{s-1}
		= \dfrac{1}{p} \binom{p}{s-1} \binom{p}{s}.
	\end{align*}
\end{proof}

\begin{remark}
There exists $\alpha \in \cC_{s,p} \setminus \cC_{s,p}^{(1)}$, such that the $u$ sequence and $d$ sequence defined by \eqref{eq-def-u sequence} and \eqref{eq-def-d sequence} respectively satisfy the restriction \eqref{eq-restriction-characteristic sequence}. Consider the canonical $2$-sequence $\alpha = (1,2,1,2)$, the corresponding $u$ sequence is $\{1,0,0,0\}$ and the corresponding $d$ sequence is $\{0,0,0,-1\}$, which satisfy \eqref{eq-restriction-characteristic sequence}. However, the graph of $\alpha$ belongs to $\Delta_3(4,2;\alpha)$, which means that $\alpha \in \cC_{2,4} \setminus \cC_{2,4}^{(1)}$. Indeed, the $\alpha$ sequence of the graph in $\Delta_1(4,2;\alpha)$ corresponding to the pair of characteristic sequence is $(1,2,2,2)$.
\end{remark}

\subsection{Proof of (i)}
We will use the index $\alpha, \beta$ for the columns of $Y$, which should be in $[m]$. We will also use the index $\bi$, a multiple index of the form $\bi = (i_1, \ldots, i_k)$, for the rows of $Y$. Then $\bi$ should be in $[n^k]$.
For any $p \in \bN$, we compute the moment
\begin{align*}
	\dfrac{1}{n^k} \bE \big[ \Tr M_{n,k,m}^p \big].
\end{align*}
We use the convention that $\bi^{(p+1)} = \bi^{(1)}$. We have
\begin{align} \label{eq-moment-0.2}
	\dfrac{1}{n^k} \bE \big[ \Tr M_{n,k,m}^p \big]
	=& \dfrac{1}{n^k} \sum_{\alpha_1, \ldots, \alpha_p = 1}^m \left( \prod_{t=1}^p \tau_{\alpha_t} \right) \bE \Big[ \Tr \big( Y_{\alpha_1} Y_{\alpha_1}^* \cdots Y_{\alpha_p} Y_{\alpha_p}^* \big) \Big] \nonumber \\
	=& \dfrac{1}{n^k} \sum_{\alpha_1, \ldots, \alpha_p = 1}^m \left( \prod_{t=1}^p \tau_{\alpha_t} \right) \bE \Bigg[ \sum_{\bi^{(1)}, \ldots, \bi^{(p)} = 1}^{n^k} \prod_{t=1}^p Y_{\bi^{(t)} \alpha_t} \overline{Y_{\bi^{(t+1)} \alpha_t}} \Bigg] \nonumber \\
	=& \dfrac{1}{n^k} \sum_{\alpha_1, \ldots, \alpha_p = 1}^m \left( \prod_{t=1}^p \tau_{\alpha_t} \right) \bE \Bigg[ \sum_{\bi^{(1)}, \ldots, \bi^{(p)} = 1}^{n^k} \prod_{t=1}^p \prod_{l=1}^k \bigg( \big( \y_{\alpha_t}^{(l)} \big)_{i^{(t)}_l} \overline{\big( \y_{\alpha_t}^{(l)} \big)_{i^{(t+1)}_l}} \bigg) \Bigg] \nonumber \\
	=& \dfrac{1}{n^k} \sum_{\alpha_1, \ldots, \alpha_p = 1}^m \left( \prod_{t=1}^p \tau_{\alpha_t} \right) \bE \Bigg[ \prod_{l=1}^k \sum_{i_l^{(1)}, \ldots, i_l^{(p)} = 1}^n \prod_{t=1}^p \bigg( \big( \y_{\alpha_t}^{(l)} \big)_{i^{(t)}_l} \overline{\big( \y_{\alpha_t}^{(l)} \big)_{i^{(t+1)}_l}} \bigg) \Bigg] \nonumber \\
	=& \dfrac{1}{n^k} \sum_{\alpha_1, \ldots, \alpha_p = 1}^m \left( \prod_{t=1}^p \tau_{\alpha_t} \right) \left( \bE \left[ \sum_{i_1^{(1)}, \ldots, i_1^{(p)} = 1}^n \prod_{t=1}^p \bigg( \big( \y_{\alpha_t}^{(1)} \big)_{i^{(t)}_1} \overline{\big( \y_{\alpha_t}^{(1)} \big)_{i^{(t+1)}_1}} \bigg) \right] \right)^k.
\end{align}
Here, we use the i.i.d. setting in the last equality.

For two sequences $\alpha = (\alpha_1, \ldots, \alpha_p) \in [m]^p$ and $i_1 = (i_1^{(1)}, \ldots, i_1^{(p)}) \in [n]^p$, we denote
\begin{align} \label{eq-def-E(i_1,alpha)}
	E(i_1,\alpha) = \bE \left[ \prod_{t=1}^p \bigg( \big( \y_{\alpha_t}^{(1)} \big)_{i^{(t)}_1} \overline{\big( \y_{\alpha_t}^{(1)} \big)_{i^{(t+1)}_1}} \bigg) \right].
\end{align}
An observation is that $E(i_1,\alpha)=E(i_1',\alpha')$ if the two sequences $i_1$ and $\alpha$ are equivalent to $i_1'$ and $\alpha'$ respectively. Hence, by \eqref{eq-moment-0.2} and \eqref{eq-0.3}, we have
\begin{align} \label{eq-moment-0.4}
	\dfrac{1}{n^k} \bE \big[ \Tr M_{n,k,m}^p \big]
	=& \dfrac{1}{n^k} \sum_{s=1}^p \sum_{\alpha \in \cJ_{s,p}(m)} \left( \prod_{t=1}^p \tau_{\alpha_t} \right) \left( \sum_{r=1}^p \sum_{i_1 \in \cJ_{r,p}(n)} E(i_1,\alpha) \right)^k \nonumber \\
	=& \dfrac{1}{n^k} \sum_{s=1}^p \sum_{\alpha \in \cC_{s,p}} \left( \sum_{\varphi \in \cI_{s,m}} \prod_{t=1}^p \tau_{\varphi(\alpha_t)} \right) \left( \sum_{r=1}^p n \cdots (n-r+1) \sum_{i_1 \in \cC_{r,p}} E(i_1,\alpha) \right)^k.
\end{align}

We first compute the sum on $i_1$ in \eqref{eq-moment-0.4}. For any sequences $\alpha \in \cC_{s,p}$, we can split the sum according to the category of the graph $g(i_1,\alpha)$ in the following way:
\begin{align} \label{eq-split of sum}
    \sum_{i_1 \in \cC_{r,p}} E(i_1,\alpha)
    =& \sum_{i_1 \in \cC_{r,p} \atop g(i_1,\alpha) \in \Delta_1(p,s;\alpha)} E(i_1,\alpha)
    + \sum_{i_1 \in \cC_{r,p} \atop g(i_1,\alpha) \in \Delta_3(p,s;\alpha)} E(i_1,\alpha) \nonumber \\
    &+ \sum_{i_1 \in \cC_{r,p} \atop g(i_1,\alpha) \in \Delta_4(p,s;\alpha)} E(i_1,\alpha)
    + \sum_{i_1 \in \cC_{r,p} \atop g(i_1,\alpha) \in \Delta_5(p,r,s;\alpha)} E(i_1,\alpha)
\end{align}
We deal with the four terms on the right hand side of \eqref{eq-split of sum} one by one. For $i_1 \in \cC_{r,p}$ with $g(i_1,\alpha) \in \Delta_1(p,s;\alpha)$, by the definition of $\Delta_1(p,s;\alpha)$, we have $E(i_1,\alpha) = n^{-p}$ and $p = r+s-1$. Hence,
\begin{align} \label{eq-sum Delta_1}
    \sum_{i_1 \in \cC_{r,p} \atop g(i_1,\alpha) \in \Delta_1(p,s;\alpha)} E(i_1,\alpha)
    = n^{1-r-s} \sum_{i_1 \in \cC_{r,p} \atop g(i_1,\alpha) \in \Delta_1(p,s;\alpha)} 1.
\end{align}
For $i_1 \in \cC_{r,p}$ with $g(i_1,\alpha) \in \Delta_3(p,s;\alpha)$, by the definition of $\Delta_3(p,s;\alpha)$, we have $E(i_1,\alpha) = n^{-p}$ and $p = r+s$. Hence,
\begin{align} \label{eq-sum Delta_3}
    \sum_{i_1 \in \cC_{r,p} \atop g(i_1,\alpha) \in \Delta_3(p,s;\alpha)} E(i_1,\alpha)
    = n^{-r-s} \sum_{i_1 \in \cC_{r,p} \atop g(i_1,\alpha) \in \Delta_3(p,s;\alpha)} 1.
\end{align}
For $i_1 \in \cC_{r,p}$ with $g(i_1,\alpha) \in \Delta_4(p,s;\alpha)$, by the definition of $\Delta_4(p,s;\alpha)$, we have $E(i_1,\alpha) = n^{-p} m_4$ and $p = r+s$. Hence,
\begin{align} \label{eq-sum Delta_4}
    \sum_{i_1 \in \cC_{r,p} \atop g(i_1,\alpha) \in \Delta_4(p,s;\alpha)} E(i_1,\alpha)
    = n^{-r-s} m_4 \sum_{i_1 \in \cC_{r,p} \atop g(i_1,\alpha) \in \Delta_4(p,s;\alpha)} 1.
\end{align}
For $i_1 \in \cC_{r,p}$ with $g(i_1,\alpha) \in \Delta_5(p,r,s;\alpha)$, by the definition of $\Delta_5(p,r,s;\alpha)$, we have $E(i_1,\alpha) = O(n^{-p})$ and $p \ge r+s+1$. Hence,
\begin{align} \label{eq-sum Delta_5}
    \sum_{i_1 \in \cC_{r,p} \atop g(i_1,\alpha) \in \Delta_5(p,r,s;\alpha)} E(i_1,\alpha)
    = O(n^{-p}) \sum_{i_1 \in \cC_{r,p} \atop g(i_1,\alpha) \in \Delta_5(p,r,s;\alpha)} 1.
\end{align}
Substituting \eqref{eq-sum Delta_1}, \eqref{eq-sum Delta_3}, \eqref{eq-sum Delta_4} and \eqref{eq-sum Delta_5} to \eqref{eq-split of sum}, and noting that
\begin{align*}
    n \cdots (n-r+1) =  n^r \left( 1 - \dfrac{r(r-1)}{2n} + O(n^{-2}) \right),
\end{align*}
we have
\begin{align}
	& \sum_{r=1}^p n \cdots (n-r+1) \sum_{i_1 \in \cC_{r,p}} E(i_1,\alpha) \nonumber \\
	=& n^{1-s} \left( 1 - \dfrac{(p+1-s)(p-s)}{2n} + O(n^{-2}) \right) \sum_{i_1 \in \cC_{p+1-s,p} \atop g(i_1,\alpha) \in \Delta_1(p,s;\alpha)} 1 \nonumber \\
	&+ n^{-s} \left( 1 - \dfrac{(p-s)(p-s-1)}{2n} + O(n^{-2}) \right) \sum_{i_1 \in \cC_{r,p} \atop g(i_1,\alpha) \in \Delta_3(p,s;\alpha)} 1 \nonumber \\
	&+ n^{-s} \left( 1 - \dfrac{(p-s)(p-s-1)}{2n} + O(n^{-2}) \right) m_4 \sum_{i_1 \in \cC_{r,p} \atop g(i_1,\alpha) \in \Delta_4(p,s;\alpha)} 1 \nonumber \\
	&+ \sum_{r=1}^{p-s-1} n^r \left( 1 - \dfrac{r(r-1)}{2n} + O(n^{-2}) \right) \sum_{i_1 \in \cC_{r,p} \atop g(i_1,\alpha) \in \Delta_5(p,r,s;\alpha)} O(n^{-p}) \nonumber \\
	=& n^{1-s} \left( 1 - \dfrac{(p+1-s)(p-s)}{2n} \right) \sum_{i_1 \in \cC_{p+1-s,p} \atop g(i_1,\alpha) \in \Delta_1(p,s;\alpha)} 1 \nonumber \\
	& + n^{-s} \left( \sum_{i_1 \in \cC_{r,p} \atop g(i_1,\alpha) \in \Delta_3(p,s;\alpha)} 1 + m_4 \sum_{i_1 \in \cC_{r,p} \atop g(i_1,\alpha) \in \Delta_4(p,s;\alpha)} 1 \right)
	+ O(n^{-s-1}) \label{eq-d>0-sum i_1} \\
	=&
	\begin{cases}
		O(n^{1-s}), & \alpha \in \cC_{s,p}^{(1)}, \\
		O(n^{-s}), & \alpha \in \cC_{s,p} \setminus \cC_{s,p}^{(1)},
	\end{cases} \label{eq-d>0-sum i_1-order}
\end{align}
where the last equality follows from Lemma \ref{lemma-Bai}.
Hence, by \eqref{eq-d>0-sum i_1}, \eqref{eq-d>0-sum i_1-order} and Lemma \ref{lemma-Bai}, \eqref{eq-moment-0.4} can be written as
\begin{align} \label{eq-d>0-moment-2.2}
	\dfrac{1}{n^k} \bE \big[ \Tr M_{n,k,m}^p \big]
	=& \dfrac{1}{n^k} \sum_{s=1}^p \sum_{\alpha \in \cC_{s,p}} \left( \sum_{\varphi \in \cI_{s,m}} \prod_{t=1}^p \tau_{\varphi(\alpha_t)} \right) \left( \sum_{r=1}^p n \cdots (n-r+1) \sum_{i_1 \in \cC_{r,p}} E(i_1,\alpha) \right)^k \nonumber \\
	=& \sum_{s=1}^p \left( \dfrac{m}{n^k} \right)^s (1+o(1)) \sum_{\alpha \in \cC_{s,p}^{(1)}} \left( \dfrac{1}{m^s} \sum_{\varphi \in \cI_{s,m}} \prod_{t=1}^p \tau_{\varphi(\alpha_t)} \right) \nonumber \\
	& \left( 1 - \dfrac{(p+1-s)(p-s)}{2n} + \dfrac{1}{n} \left( \sum_{i_1 \in \cC_{r,p} \atop g(i_1,\alpha) \in \Delta_3(p,s;\alpha)} 1 + m_4 \sum_{i_1 \in \cC_{r,p} \atop g(i_1,\alpha) \in \Delta_4(p,s;\alpha)} 1 \right) \right)^k.
\end{align}

Recall the definition of $\deg_t(\alpha)$ given in \eqref{eq:deg_t}
Then the sequence $\alpha$ has exactly $\deg_t(\alpha)$ vertices that equals to $t$. Thus, we have the following identity
\begin{align*}
	\sum_{t=1}^s \deg_t(\alpha) = p.
\end{align*}

The following lemma computes the size of $\Delta_4(p,s;\alpha)$ for given $\alpha \in \cC_{s,p}^{(1)}$.

\begin{lemma} \label{lemma-Delta_4}
For fixed $\alpha = (\alpha_1, \ldots, \alpha_p) \in \cC_{s,p}^{(1)}$, the number of $i_1 \in \cC_{r,p}$ such that $g(i_1,\alpha) \in \Delta_4(p,s;\alpha)$ is
\begin{align} \label{eq-Delta_4 number}
	\sum_{t=1}^s \binom{\deg_t(\alpha)}{2}.
\end{align}
\end{lemma}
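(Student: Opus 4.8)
The plan is to set up a bijection between the set of graphs $g(i_1,\alpha) \in \Delta_4(p,s;\alpha)$ and the disjoint union, over $t \in [s]$, of the set of $2$-element subsets of the positions at which $\alpha$ equals $t$. Recall that a graph in $\Delta_4(p,s;\alpha)$ is obtained from a $\Delta_1$-type tree structure by designating exactly one pair of coincident down edges (equivalently, exactly one repeated up-innovation/down-edge pairing) that gets doubled: there are two up edges and two down edges sharing the same pair of endpoints $\{\alpha_u, i_1^{(u)}\} = \{\alpha_v, i_1^{(v)}\}$ with $u \ne v$, and every other down edge coincides with exactly one up edge exactly as in the $\Delta_1$ case. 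So each such graph is determined by (a) the underlying tree skeleton and (b) the choice of which edge is doubled. First I would argue that the tree skeleton is forced: since $\alpha \in \cC_{s,p}^{(1)}$, Lemma \ref{lemma-Bai} produces the unique $i_1 \in \cC_{p+1-s,p}$ with $g(i_1,\alpha) \in \Delta_1(p,s;\alpha)$, and I claim any $\Delta_4$-graph over this $\alpha$ must have, after deleting one of the two doubled edges, precisely that $\Delta_1$ structure — because deleting one copy of the doubled edge leaves a graph in which every down edge coincides with exactly one up edge, i.e. a $\Delta_1(p,s;\alpha)$-graph, and there is only one of those.

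Given that the skeleton is the fixed $\Delta_1$-graph, a $\Delta_4$-graph amounts to choosing which single down-edge/up-edge pair of the tree to double. Concretely, in the $\Delta_1$-graph each down edge $(\alpha_u, i_1^{(u)})$ coincides with exactly one up edge; doubling means selecting two positions $u < v$ with $\alpha_u = \alpha_v$ and $i_1^{(u)} = i_1^{(v)}$ (so that the two down edges and their two matching up edges all lie between the same pair of endpoints) and merging them into a doubled edge. The key combinatorial step is therefore: for the fixed $\Delta_1$-skeleton attached to $\alpha \in \cC_{s,p}^{(1)}$, whenever $\alpha_u = \alpha_v$ one automatically has $i_1^{(u)} = i_1^{(v)}$ as well. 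This should follow from the tree structure of the $\Delta_1$-graph: the vertex $\alpha_u$ is reached by a unique path from $\alpha_1$ in the tree of noncoincident edges, and the down edge emanating from any occurrence of $\alpha_u$ in the coincident-edge picture goes back up that same tree edge, so the $i$-endpoint is the same for all occurrences of $\alpha_u$. I would spell this out using the inductive construction in the proof of Lemma \ref{lemma-Bai}, where one sees that after the up-innovation that first creates vertex $\alpha_u = t$, every later down edge from $t$ coincides with that same up-innovation edge, hence lands on the same $i$-vertex. Consequently the choice reduces to: pick a value $t \in [s]$ and pick two of the $\deg_t(\alpha)$ positions where $\alpha$ takes value $t$; this is exactly $\sum_{t=1}^s \binom{\deg_t(\alpha)}{2}$ choices.

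Finally I would check that distinct choices give distinct (non-equivalent as canonical) graphs and that every $\Delta_4(p,s;\alpha)$-graph arises this way, and that the resulting $i_1$ is indeed a canonical $r$-sequence with $r = p - s$ as recorded in the classification (deleting the doubled edge removes one $i$-vertex relative to $\Delta_1$, so $r = (p+1-s) - 1 = p-s$, consistent with $r + s = p$). The main obstacle I anticipate is the claim that occurrences of the same $\alpha$-value in the $\Delta_1$-skeleton force the same $i$-endpoint — i.e. ruling out a $\Delta_4$-graph in which the doubled edge connects $\alpha_u$ to a \emph{different} $i$-vertex than some other occurrence of the same $\alpha$-value, which would create a cycle and contradict the tree property; making this rigorous requires carefully tracking the path structure through the induction of Lemma \ref{lemma-Bai} rather than a one-line argument. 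Once that is in hand, the count is immediate.
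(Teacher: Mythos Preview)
Your proposal contains a genuine error at its core. The claim that in the unique $\Delta_1$-skeleton one has $i_1^{(u)} = i_1^{(v)}$ whenever $\alpha_u = \alpha_v$ is \emph{false}: in a $\Delta_1(p,s;\alpha)$-graph the glued graph is a tree, and the $\alpha$-vertex labeled $t$ has exactly $\deg_t(\alpha)$ \emph{distinct} $i$-neighbors (one for each position $j$ with $\alpha_j = t$, via the down edge $(t,i_1^{(j)})$). For instance, with $\alpha=(1,1,1)$ the $\Delta_1$-sequence is $i_1=(1,2,3)$, so $\alpha_1=\alpha_2=\alpha_3$ but $i_1^{(1)},i_1^{(2)},i_1^{(3)}$ are all different. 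Consequently your picture of a $\Delta_4$-graph as ``the $\Delta_1$-skeleton with one edge doubled'' cannot be right: a $\Delta_4$-graph has $r=p-s$ distinct $i$-vertices, strictly fewer than the $p+1-s$ of the $\Delta_1$-graph, so the two objects do not share the same $i_1$-sequence. Likewise, ``deleting one copy of the doubled edge'' is not a legal operation on $\Delta(p;\alpha)$-graphs, since those graphs are determined by the pair of sequences and always carry exactly $2p$ edges.

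The correct passage between $\Delta_4$ and $\Delta_1$ is not edge-doubling but \emph{vertex splitting/gluing on the $i$-line}: from a $\Delta_4$-graph one splits the $i$-vertex incident to the four-fold edge into two, recovering the unique $\Delta_1$-graph; conversely one obtains every $\Delta_4$-graph from the $\Delta_1$-graph by gluing two $i$-vertices that share a common $\alpha$-neighbor $t$. Since $t$ has $\deg_t(\alpha)$ distinct $i$-neighbors, this gives $\binom{\deg_t(\alpha)}{2}$ choices for each $t$, and summing yields the formula. Your position-based count $\sum_t\binom{\deg_t(\alpha)}{2}$ happens to match because the map $j\mapsto i_1^{(j)}$ is a bijection from positions with $\alpha_j=t$ to $i$-neighbors of $t$; but to make the argument work you must replace the false ``same $i$-endpoint'' claim with this gluing description and verify that gluing two $i$-neighbors of a common $\alpha$-vertex indeed produces a $\Delta_4$-graph (and nothing else).
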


\begin{proof}
We sort the graphs in $\Delta_4(p,s;\alpha)$ into two classes. We will show that graphs in $\Delta_4(p,s;\alpha)$ can be transferred to graphs in $\Delta_1(p,s;\alpha)$ by splitting the vertex on the sequence $i_1$ associated to the multiple edges.

The first class consists of graphs whose the first $l-1$ up edges are distinct, and the $l$-th down edge $(\alpha_l,i_1^{(l)})$ coincide with the $j$-th down edge for some $1 \le j < l \le p$. Hence, $\alpha_l = \alpha_j$ and $i_1^{(l)} = i_1^{(j)}$. Since the subgraph of the path from $\alpha_j$ to $\alpha_l$ does not have a cycle, one can find $1 \le j' < l$, such that the $j'$-th up edge is from $i_1^{(l)}$ to $\alpha_l$. Another up edge from $i_1^{(l)}$ to $\alpha_l$ belongs to the set of the last $p-(l-1)$ up edges. Hence, the edge $(\alpha_l, i_1^{(l)})$ is not a down innovation. Now we split the vertex $i_1^{(l)}$ into two vertices $i_1^{(l,1)}$ and $i_1^{(l,2)}$. The new vertex $i_1^{(l,1)}$ is still labeled by the value of $i_1^{(l)}$ and the first $l-1$ pairs of up and down edges which connect $i_1^{(l)}$ are plotted to connect the new vertex $i_1^{(l,1)}$. The $l$-th down edge connect $i_1^{(l)}$ is plotted to connect the new vertex $i_1^{(l,2)}$, which is a down innovation. The rest of the edges can be plotted such that the new graph belongs to $\Delta_1(p,s;\alpha)$. See the graph below.

\begin{figure}[htbp]
  \centering
  \begin{minipage}{6.5in}
    \centering
    \raisebox{-0.5\height}{\includegraphics[scale=0.4]{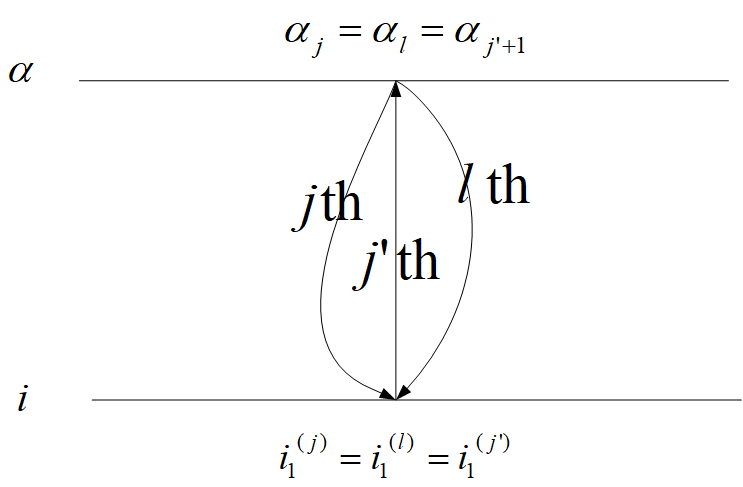}}
    \quad \raisebox{-0.5\height}{\mbox{$\boldsymbol{\longrightarrow}$}}\qquad
    \raisebox{-0.5\height}{\includegraphics[scale=0.4]{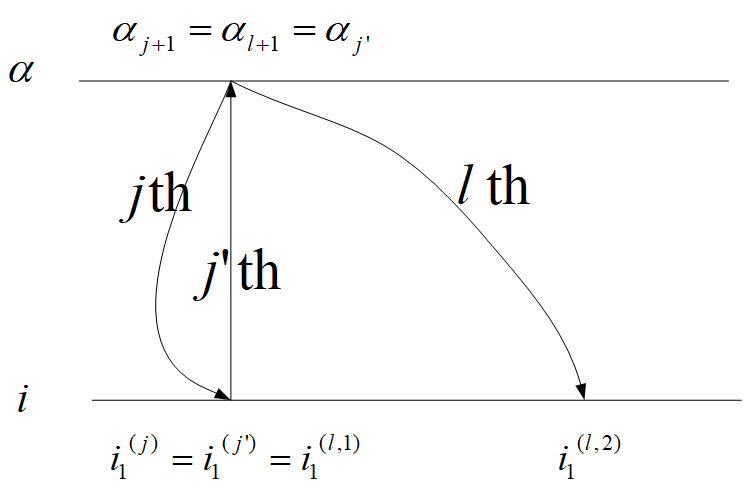}}
  \end{minipage}
  \caption{A graph in the first class of  $\Delta_4(p,s;\alpha)$
    (left column) and corresponding $\Delta_1(p,s;\alpha)$  graph
    (right column).\label{fig:Delta4_classe_1}}
\end{figure}

The second class consists of graphs whose first $l$ down edges are distinct, and the $l$-th up edge $(i_1^{(l)}, \alpha_{l+1})$ coincide with the $j$-th up edge for some $1 \le j < l \le p-1$. This means that $i_1^{(l)} = i_1^{(j)}$ and $\alpha_{l+1} = \alpha_{j+1}$. Since the subgraph of the path from $i_1^{(j)}$ to $i_1^{(l)}$ does not have a cycle, one can find $j < j' \le l$, such that the $j'$-th down edge is from $\alpha_{l+1}$ to $i_1^{(l)}$, which means that $\alpha_{j'} = \alpha_{l+1}$, $i_1^{(j')} = i_1^{(l)}$ and the $j'$-th down edge is not a down innovation. Another down edge from $\alpha_{l+1}$ to $i_1^{(l)}$ belongs to the set of the last $p-l$ down edges. Now we split the vertex $i_1^{(l)}$ into two vertices $i_1^{(l,1)}$ and $i_1^{(l,2)}$. The new vertex $i_1^{(l,1)}$ is still labeled by the value of $i_1^{(l)}$ and the first $j'-1$ pairs of up and down edges which connect $i_1^{(l)}$ are plotted to connect the new vertex $i_1^{(l,1)}$. The $j'$-th down edge connect $i_1^{(j')} = i_1^{(l)}$ is plotted to connect the new vertex $i_1^{(l,2)}$, which is a down innovation. The $l$-th up edge starts from $i_1^{(l,2)}$. The rest of the edges can be plotted such that the new graph belongs to $\Delta_1(p,s;\alpha)$. See the graph below.

\begin{figure}[htbp]
\centering
  \begin{minipage}{6.5in}
    \centering
    \raisebox{-0.5\height}{\includegraphics[scale=0.4]{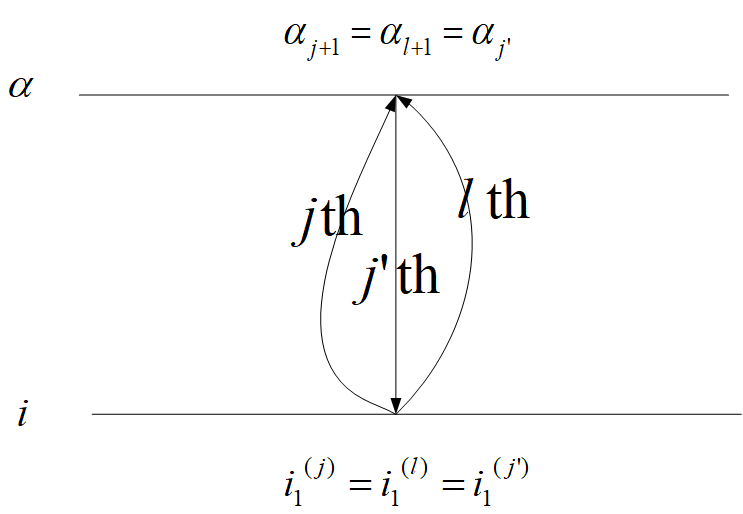}}
    \quad \raisebox{-0.5\height}{\mbox{$\boldsymbol{\longrightarrow}$}}\qquad
    \raisebox{-0.5\height}{\includegraphics[scale=0.4]{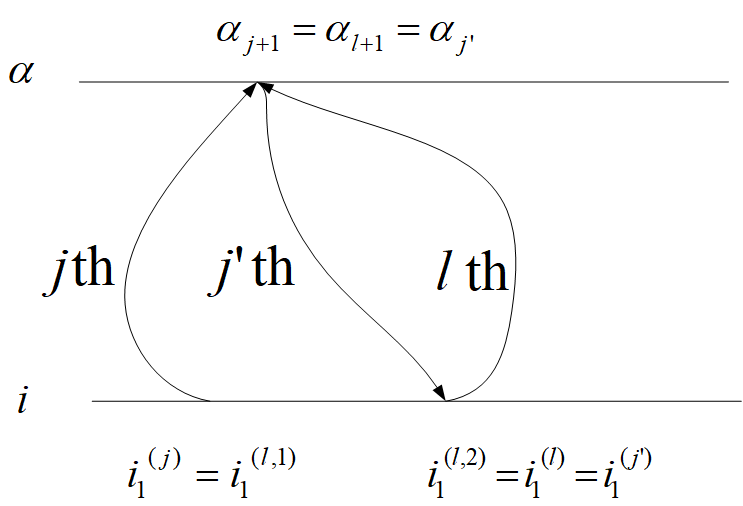}}
  \end{minipage}
  \caption{A graph in the second class of  $\Delta_4(p,s;\alpha)$
    (left column) and corresponding $\Delta_1(p,s;\alpha)$  graph
    (right column).\label{fig:Delta4_classe_2}}
\end{figure}

Therefore, for any graphs in $\Delta_4(p,s;\alpha)$, one can split the vertex on the sequence $i_1$ associated to the multiple edges to get a graph in $\Delta_1(p,s;\alpha)$. Equivalently, any graphs in $\Delta_4(p,s;\alpha)$ can be obtained from graphs in $\Delta_1(p,s;\alpha)$ by gluing two vertices on the sequence $i_1$ that has a same neighborhood. Moreover, gluing different pairs of vertices on the sequence $i_1$ leads to different graphs in $\Delta_4(p,s;\alpha)$.

For fixed $\alpha \in \cC_{s,p}^{(1)}$, by Lemma \ref{lemma-Bai}, there exists a unique sequence $i_1$, such that the graph $g(i_1,\alpha) \in \Delta_1(p,s;\alpha)$. For the vertex $t$ on the $\alpha$-line, an observation is that the number of its neighborhoods on the $i$-line is $\deg_t(\alpha)$. Hence, the choice to glue two vertices on the sequence $i_1$ with the same neighborhood $t$ on the $\alpha$-line is
\begin{align*}
	\binom{\deg_t(\alpha)}{2}.
\end{align*}
Thus, for fixed $\alpha \in \cC_{s,p}^{(1)}$, the number of $i_1 \in \cC_{r,p}$ such that $g(i_1,\alpha) \in \Delta_4(p,s;\alpha)$ is given by \eqref{eq-Delta_4 number}.
\end{proof}

The following lemma is a property of the sequence in $\cC_{s,p}^{(1)}$.

\begin{lemma} \label{lemma-abab}
For $\alpha \in \cC_{s,p}$, if there exist different integers $t_1, t_2 \in [s]$, such that $\alpha_{j_1} = \alpha_{j_1'} = t_1$ and $\alpha_{j_2} = \alpha_{j_2'} = t_2$ for some $1 \le j_1 < j_2 < j_1' < j_2' \le p$, then $\alpha \notin \cC_{s,p}^{(1)}$.
\end{lemma}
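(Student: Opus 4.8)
The plan is to argue by contradiction using the characterization of $\cC_{s,p}^{(1)}$ developed in Lemma~\ref{lemma-Bai}: a canonical sequence $\alpha$ lies in $\cC_{s,p}^{(1)}$ if and only if the associated characteristic sequences $\{u_l\}$, $\{d_l\}$ (defined by \eqref{eq-def-u sequence} and \eqref{eq-def-d sequence}) satisfy the restriction \eqref{eq-restriction-characteristic sequence}. So I would assume $\alpha$ contains an interlacing pattern $t_1, t_2, t_1, t_2$ at positions $j_1 < j_2 < j_1' < j_2'$, and produce a witness index $l$ at which \eqref{eq-restriction-characteristic sequence} fails, i.e.
\begin{align*}
	u_1 + \cdots + u_{l-1} + d_2 + \cdots + d_l < 0.
\end{align*}

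First I would reformulate the partial sum combinatorially. From \eqref{eq-def-u sequence}, $u_1 + \cdots + u_{l-1}$ counts the up-innovations among the first $l-1$ up edges, which equals $(\text{number of distinct values in } \{\alpha_1, \ldots, \alpha_l\}) - 1$; call this quantity $A_l - 1$ where $A_l = |\{\alpha_1,\dots,\alpha_l\}|$. From \eqref{eq-def-d sequence}, $-(d_2 + \cdots + d_l)$ counts those indices $u \in \{2,\dots,l\}$ with $\alpha_u \notin \{1,\alpha_{u+1},\dots,\alpha_p\}$, i.e. the indices at which the value $\alpha_u$ makes its \emph{last} appearance in $\alpha$ and is not equal to $1$; equivalently this is the number of distinct values in $\{\alpha_2,\dots,\alpha_l\}$ other than $1$ whose last occurrence in the whole sequence is $\le l$. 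So \eqref{eq-restriction-characteristic sequence} reads: for every $l$, the number of ``opened'' symbols (distinct values seen so far, minus the value $1$) is at least the number of ``closed'' symbols (values whose final occurrence has already passed, other than $1$). This is exactly the non-crossing / balanced-parenthesis condition.

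Next I would exploit the interlacing. Choose $l = j_1'$ (the third position, carrying value $t_1$). I claim that at this $l$ the symbol $t_2$ is already ``closed'' in the count $-(d_2 + \cdots + d_l)$ but the symbol $t_1$ is \emph{not} yet ``opened'' as a net contribution — more precisely, I would track the cancellation directly. The value $t_2$ occurs at $j_2 < j_1'$ and at $j_2' > j_1'$, so $\alpha_{j_2} = t_2 \in \{\alpha_{j_2+1}, \ldots, \alpha_p\}$ (since $j_2' > j_1' = l \ge j_2$ puts a later copy in the tail), hence $d_{j_2} = 0$, not $-1$ — so $t_2$ does \emph{not} contribute $-1$ here. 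The cleaner route: pick instead the last occurrence structure. Let me take $l$ to be the position just before $\alpha_{j_1'}$ reveals... Actually the robust choice is to track the running value of the left side of \eqref{eq-restriction-characteristic sequence} and observe it must dip negative somewhere in $[j_2, j_1']$: at any index in the open interval $(j_2, j_1')$, among the distinct values seen, $t_1$ has appeared (at $j_1$) and $t_2$ has appeared (at $j_2$), but looking at closures, we can force a deficit by choosing the sub-window between the two middle occurrences and counting that $t_1$'s occurrence at $j_1$ is not its last (copy at $j_1'$ remains) while... I would instead run the argument on the sequence restricted to the two symbols: the standard fact that $\ldots t_1 \ldots t_2 \ldots t_1 \ldots t_2 \ldots$ cannot be ``well-nested'', so some prefix closes more brackets of one type than it has opened. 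Formally, I would define $f(l) = (A_l - 1) + (d_2 + \cdots + d_l)$, note $f(1) = 0$, show $f$ changes by $u_{l-1} + d_l \in \{-1,0,1\}$ per step, and exhibit that the contribution pattern of $t_1, t_2$ along $j_1 < j_2 < j_1' < j_2'$ forces $f$ below $0$; the point is that between $j_1'$ and $j_2'$ the symbol $t_1$ reaches its last occurrence ($d$ drops by $1$ at $j_1'$ if $t_1 \ne 1$, using that no copy of $t_1$ survives past $j_1'$... wait, we only assumed two occurrences of $t_1$, so $j_1'$ IS the last, good) while $t_2$ has not yet been ``opened'' in a way that pays for it — this is where the interlacing (rather than nesting) is essential.

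The main obstacle is the bookkeeping around the value $1$ and around which occurrences are genuinely last: the definitions \eqref{eq-def-u sequence}--\eqref{eq-def-d sequence} single out $\alpha_1 = 1$, and $d_l = -1$ requires $\alpha_l \notin \{1, \alpha_{l+1}, \dots, \alpha_p\}$, so if $t_1 = 1$ or $t_2 = 1$ the naive count breaks and one must argue separately (but $\alpha_1 = 1$ appears at position $1 < j_1$, and ``$1$ never closes'' is built into the definition, so if say $t_1 = 1$ then the sequence has value $1$ at positions $1, j_1, j_1'$ contradicting nothing directly — I'd handle this by noting $1$'s final occurrence being irrelevant actually only helps, or by observing $j_1, j_1'$ then give a third and fourth occurrence of $1$ which is fine). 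I expect to spend most of the effort making the ``crossing forces a negative prefix'' claim precise — likely by choosing $l$ to be the position of the last occurrence of $t_1$ among the first three listed occurrences, i.e. $l = j_1'$, and showing $f(j_1') < f(j_2)$ strictly while $f(j_2) \le$ (something that was already tight), pinning down that $u_{l-1} = 0$ forcedly at the relevant step because no new symbol is introduced at a position carrying a repeated value $t_1$. Once the reduction to this elementary word-combinatorics statement is clean, the rest is immediate from Lemma~\ref{lemma-Bai}.
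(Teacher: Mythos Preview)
Your plan rests on a false premise. You claim that Lemma~\ref{lemma-Bai} gives the characterization ``$\alpha \in \cC_{s,p}^{(1)}$ if and only if the characteristic sequences $\{u_l\}$, $\{d_l\}$ built from $\alpha$ via \eqref{eq-def-u sequence}--\eqref{eq-def-d sequence} satisfy \eqref{eq-restriction-characteristic sequence}.'' This is not what Lemma~\ref{lemma-Bai} says, and in fact the Remark immediately following that lemma gives an explicit counterexample: for $\alpha = (1,2,1,2) \in \cC_{2,4}$ one has $u = (1,0,0,0)$ and $d = (0,0,0,-1)$, which \emph{do} satisfy \eqref{eq-restriction-characteristic sequence}, yet $\alpha \notin \cC_{2,4}^{(1)}$. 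The point is that the map $\alpha \mapsto (\{u_l\},\{d_l\})$ is not injective; the reconstruction procedure in the proof of Lemma~\ref{lemma-Bai}, applied to this pair of characteristic sequences, returns the $\alpha$-sequence $(1,2,2,2)$, not $(1,2,1,2)$. So membership in $\cC_{s,p}^{(1)}$ requires not only that \eqref{eq-restriction-characteristic sequence} hold but also that the reconstructed canonical sequence coincide with the original $\alpha$.

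This is fatal for your strategy, because $(1,2,1,2)$ is the prototype of the interlacing pattern $t_1,t_2,t_1,t_2$ you are trying to exclude: on this very example you would need \eqref{eq-restriction-characteristic sequence} to fail, and it does not. All of your sketch's difficulty---the hedging about which $l$ to pick, the detours through $j_1'$ versus $j_2$, the special-casing of the value $1$---is a symptom of trying to prove something false. The paper's proof bypasses the characteristic sequences entirely: assuming $\alpha \in \cC_{s,p}^{(1)}$, take the unique $i_1$ with $g(i_1,\alpha) \in \Delta_1(p,s;\alpha)$, and observe that the walk segment from position $j_1$ to $j_2$ and the segment from $j_1'$ to $j_2'$ give two walks in the graph from the $\alpha$-vertex $t_1$ to the $\alpha$-vertex $t_2$; since after gluing coincident edges the $\Delta_1$ graph is a tree, this forces either a repeated directed edge or an undirected cycle, contradicting the $\Delta_1$ definition. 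That argument is three lines and uses only the defining tree property of $\Delta_1(p,s;\alpha)$.
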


\begin{proof}
We prove by contradiction. Suppose not, then there exists a $w \in \cC_{p+1-s,p}$, such that $g(w,\alpha) \in \Delta_1(p,s;\alpha)$. Note that the path from $\alpha_{j_1}$ to $\alpha_{j_2}$ and the path from $\alpha_{j_1'}$ to $\alpha_{j_2'}$ are two paths from the vertex on the $\alpha$-line with labeled $t_1$ to the vertex on the $\alpha$-line with labeled $t_2$. The two paths will lead to multiple directed edges or undirected cycle, which is contradicted to the definition of $\Delta_1(p,s;\alpha)$.
\end{proof}

The following corollary is a direct consequence of Lemma \ref{lemma-abab}.

\begin{corollary}
For $\alpha \in \cC_{s,p}^{(1)}$, graphs in $\Delta_3(p,s;\alpha)$ are the $\Delta(p;\alpha)$-graphs in which each down edge must coincide with one and only one up edge. If we glue the coincident edges, the resulting graph is a connected graph with exactly one cycle.
\end{corollary}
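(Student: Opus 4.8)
We sketch the argument. The plan is to reduce the statement to a single point and then to extract from any violation of that point a pattern in $\alpha$ forbidden by Lemma~\ref{lemma-abab}. The only substantive assertion is that, when $\alpha\in\cC_{s,p}^{(1)}$, no graph in $\Delta_3(p,s;\alpha)$ contains a coincident pair of edges that are both down edges or both up edges. Indeed, in a graph in $\Delta_3(p,s;\alpha)$ the $2p$ directed edges fall into $p$ coincident pairs, since each pair of distinct vertices carries $0$ or $2$ edges; once every one of these $p$ pairs consists of one down edge and one up edge, the pairing is a bijection between the $p$ down edges and the $p$ up edges, which is exactly the assertion that each down edge coincides with one and only one up edge, and the statement that after gluing the resulting graph is connected with one cycle is already part of the definition of $\Delta_3(p,s;\alpha)$.

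To prove the reduced statement I would work with the glued graph $G$ of a graph in $\Delta_3(p,s;\alpha)$: it is connected, unicyclic, and bipartite between the $\alpha$-vertices and the $i$-vertices (every edge of $g(i_1,\alpha)$ joins an $\alpha$-vertex to an $i$-vertex), so its unique cycle has even length at least $4$; and the closed walk $W=(\alpha_1,i_1^{(1)},\alpha_2,\dots,\alpha_p,i_1^{(p)},\alpha_1)$ runs over every edge of $G$ exactly twice, alternating down-steps and up-steps. Any bridge of $G$ is crossed by $W$ once in each direction, since a closed walk crosses a bridge an even number of times, equally often each way; hence a bridge always gives a pair of one down and one up edge, and a same-type coincident pair (two down edges, or two up edges) can occur only along the cycle of $G$. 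Suppose for contradiction that a cycle edge $e=\{t,j\}$, with $t$ an $\alpha$-vertex and $j$ an $i$-vertex, arises from two down edges $D_u,D_{u'}$ with $u<u'$, both oriented $t\to j$, so that $\alpha_u=\alpha_{u'}=t$; the up--up case is symmetric. Let $P$ be the complementary arc of the cycle, that is, the unique $j$--$t$ path in the tree $G\setminus e$, and let $t_2\neq t$ be the $\alpha$-vertex of $P$ adjacent to $j$. Cutting $W$ at its two passages through $e$ produces two walks from $j$ to $t$ that avoid $e$, hence lie in $G\setminus e$ and therefore traverse all of $P$; in particular each of them visits $t_2$. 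The first walk runs through the coordinates $\alpha_{u+1},\dots,\alpha_{u'}$, so, since $\alpha_{u'}=t\neq t_2$, we get $t_2=\alpha_{j_2}$ for some $u<j_2<u'$; the second runs through $\alpha_{u'+1},\dots,\alpha_p,\alpha_1,\dots,\alpha_u$, so, since $\alpha_u=t\neq t_2$, we get $t_2=\alpha_{j_2'}$ for some index $j_2'$ with $j_2'>u'$ or $j_2'<u$. In either case there are four coordinates at which $\alpha$ reads $t,t_2,t,t_2$ in alternating cyclic order, and reordering them increasingly yields indices $j_1<j_2<j_1'<j_2'$ with $\alpha_{j_1}=\alpha_{j_1'}$ and $\alpha_{j_2}=\alpha_{j_2'}$ of two different values --- precisely the configuration forbidden by Lemma~\ref{lemma-abab}. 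This contradicts $\alpha\in\cC_{s,p}^{(1)}$ and completes the argument.

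I expect the only delicate part of this to be bookkeeping rather than conceptual difficulty: one must check with care that $e$ cannot be a bridge and hence lies on the cycle, that the two arcs of $W$ between the passages through $e$ genuinely sweep out the whole path $P$ (so that $t_2$ is visited inside the stated coordinate ranges, and in particular strictly between $u$ and $u'$ for the first arc), and that the wrap-around of indices in the second arc is handled so as to produce honest indices $j_1<j_2<j_1'<j_2'$. Should that direct bookkeeping become unwieldy, an alternative I would fall back on is to contract each maximal excursion of $W$ into a subtree hanging off the cycle and analyse the resulting closed walk on the cycle $C$ by a winding-number count: it traverses $C$ either once in each direction, in which case every coincident pair on $C$ (and, as already noted, every pair on a bridge) is of one down and one up edge, or twice in the same direction, in which case the $\alpha$-vertices lying on $C$, of which there are at least two, are read off by $W$ in a repeated cyclic order, again producing the alternating pattern forbidden by Lemma~\ref{lemma-abab}.
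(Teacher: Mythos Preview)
Your argument is correct and follows the same route the paper takes: the paper states the corollary as ``a direct consequence of Lemma~\ref{lemma-abab}'' and gives no further details, so you have simply supplied the missing verification. Your reduction --- that the only nontrivial point is to rule out a pair of two down (or two up) edges sharing endpoints, and that such a pair forces the interlacing pattern $t_1,t_2,t_1,t_2$ in $\alpha$ excluded by Lemma~\ref{lemma-abab} --- is exactly the intended mechanism. The bridge observation (a bridge is crossed once each way, hence pairs a down with an up edge) and the tree argument (any walk between two vertices in the tree $G\setminus e$ must visit every vertex of the unique connecting path, in particular the $\alpha$-vertex $t_2$ adjacent to $j$) are both sound, and the cyclic reordering to produce indices $j_1<j_2<j_1'<j_2'$ in either wrap-around case is handled correctly. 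The symmetric up--up case goes through in the same way with $t$ occurring at positions $u+1$ and $u'+1$. Your fallback winding-number approach is also valid but unnecessary: the direct bookkeeping is not delicate here.
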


The following lemma computes the size of $\Delta_3(p,s;\alpha)$ for given $\alpha \in \cC_{s,p}^{(1)}$.

\begin{lemma} \label{lemma-Delta_3}
For fixed $\alpha = (\alpha_1, \ldots, \alpha_p) \in \cC_{s,p}^{(1)}$, the number of $i_1 \in \cC_{r,p}$ such that $g(i_1,\alpha) \in \Delta_3(p,s;\alpha)$ is
\begin{align} \label{eq-Delta_3 number}
	\binom{p+1-s}{2} - \sum_{t=1}^s \binom{\deg_t(\alpha)}{2}.
\end{align}
\end{lemma}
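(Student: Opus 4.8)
The proof runs parallel to that of Lemma~\ref{lemma-Delta_4}. By the corollary following Lemma~\ref{lemma-abab}, for $\alpha\in\cC_{s,p}^{(1)}$ every graph in $\Delta_3(p,s;\alpha)$ has each down edge coinciding with exactly one up edge and, after gluing coincident edges, is connected with exactly one cycle; in particular $r+s=p$, so we are counting sequences $i_1\in\cC_{p-s,p}$. The plan is to exhibit a bijection between $\Delta_3(p,s;\alpha)$ and the set of unordered pairs of vertices on the $i$-line of the unique $\Delta_1$-graph $g(i_1^{\Delta_1},\alpha)$ (Lemma~\ref{lemma-Bai}) that have \emph{no} common neighbour on the $\alpha$-line --- the same ``gluing'' operation as in Lemma~\ref{lemma-Delta_4}, but now identifying two $i$-vertices that are \emph{not} adjacent to a common $\alpha$-vertex, so that a genuine cycle (of length $\ge4$) is created rather than a quadruple edge.

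First I would check the easy direction: if we glue two $i$-vertices of $g(i_1^{\Delta_1},\alpha)$ with no common $\alpha$-neighbour, then no two of the $p$ glued edges get identified, so the resulting graph $g(i_1,\alpha)$ still has each down edge coinciding with exactly one up edge and has either $0$ or $2$ edges between any two vertices; it has $p$ vertices and $p$ glued edges and is connected, hence contains exactly one cycle. By the corollary above, $g(i_1,\alpha)\in\Delta_3(p,s;\alpha)$ with $i_1\in\cC_{p-s,p}$. For the converse, given a graph in $\Delta_3(p,s;\alpha)$ I would split the $i$-vertex at which the closed walk $\alpha_1\to i_1^{(1)}\to\alpha_2\to i_1^{(2)}\to\cdots$ closes its (unique) cycle, routing the two arcs of the cycle incident to that vertex, together with the remaining incident edges as dictated by the walk, to the two new $i$-vertices; this unwinds the cycle into a tree in which each down edge still coincides with exactly one up edge, i.e.\ a graph in $\Delta_1(p,s;\alpha)$, which by Lemma~\ref{lemma-Bai} must be $g(i_1^{\Delta_1},\alpha)$, and the two new $i$-vertices cannot share an $\alpha$-neighbour (else the $\Delta_3$-graph would contain a quadruple edge). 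Together with the fact --- established exactly as at the end of the proof of Lemma~\ref{lemma-Delta_4} --- that distinct pairs give inequivalent sequences $i_1$, this yields the bijection.

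Finally the count. By Lemma~\ref{lemma-Bai} the graph $g(i_1^{\Delta_1},\alpha)$ has $p+1-s$ vertices on the $i$-line, hence $\binom{p+1-s}{2}$ unordered pairs of them. As noted in the proof of Lemma~\ref{lemma-Delta_4}, the vertex $t$ on the $\alpha$-line has exactly $\deg_t(\alpha)$ neighbours on the $i$-line, and since $g(i_1^{\Delta_1},\alpha)$ is a tree no pair of $i$-vertices has two distinct common $\alpha$-neighbours; therefore the pairs that \emph{do} share an $\alpha$-neighbour are counted, without repetition, by $\sum_{t=1}^s\binom{\deg_t(\alpha)}{2}$ (these are precisely the pairs that produce $\Delta_4(p,s;\alpha)$). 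Subtracting, the pairs with no common $\alpha$-neighbour number $\binom{p+1-s}{2}-\sum_{t=1}^s\binom{\deg_t(\alpha)}{2}$, and by the bijection this equals $|\Delta_3(p,s;\alpha)|$, which is \eqref{eq-Delta_3 number}.

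The main obstacle is the converse direction in the second paragraph: verifying that cutting the cycle recovers \emph{precisely} the unique $\Delta_1$-tree and that the recovered pair of $i$-vertices does not depend on the apparent freedom in the unwinding (which vertex of the cycle one cuts at, how the off-cycle branches are distributed). I expect this to need a bookkeeping argument tracking the closed walk, in the spirit of Cases~1--4 in the proof of Lemma~\ref{lemma-Bai} and the two-class analysis in the proof of Lemma~\ref{lemma-Delta_4}; the hypothesis $\alpha\in\cC_{s,p}^{(1)}$, through Lemma~\ref{lemma-abab} (absence of the ``$abab$'' pattern), is exactly what rules out the configurations in which different cuts would produce different trees.
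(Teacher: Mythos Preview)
Your proposal is correct and follows essentially the same route as the paper: establish a bijection between $\Delta_3(p,s;\alpha)$ and the pairs of $i$-vertices of the unique $\Delta_1$-tree with no common $\alpha$-neighbour, then count by subtracting the $\Delta_4$-count from $\binom{p+1-s}{2}$. The paper dispatches the converse you flag as the obstacle exactly as you anticipate, via a two-class analysis according to whether the cycle is first created by a down edge or an up edge; the second class is shown to be \emph{empty} using Lemma~\ref{lemma-abab}, so the splitting point is forced and the inverse map is well defined.
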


\begin{proof}
Recall that down edges of graphs in $\Delta_3(p,s;\alpha)$ coincide with exactly one up edge, and the graph is connected with exactly one cycle when gluing coincident edges.

We also sort the graphs in $\Delta_3(p,s;\alpha)$ into two classes. We will show that graphs in $\Delta_3(p,s;\alpha)$ can be transferred to graphs in $\Delta_1(p,s;\alpha)$ by splitting an appropriate vertex on the sequence $i_1$.

The first class consists of graphs whose subgraph with first $l-1$ up and down edges has no cycle when orientation is removed and multiple edges are identified, but has a cycle when adding the $l$-th down edge. Hence, we have $3 \le l \le p$, and both $\alpha_l$ and $i_1^{(l)}$ are not new vertices. Now we split the vertex $i_1^{(l)}$ into two vertices $i_1^{(l,1)}$ and $i_1^{(l,2)}$. The new vertex $i_1^{(l,1)}$ is still labeled by the value of $i_1^{(l)}$, and the first $l-1$ pairs of up and down edges which connect $i_1^{(l)}$ are plotted to connect the new vertex $i_1^{(l,1)}$. The $l$-th down edge $(\alpha_l, i_1^{(l)})$ is replaced by the new down edge $(\alpha_l, i_1^{(l,2)})$, which is a down innovation. The $l$-th up edge starts from $i_1^{(l,2)}$. The rest of the edges can be plotted such that the new graph belongs to $\Delta_1(p,s;\alpha)$. See the graph below.

\begin{figure}[htbp]
  \begin{minipage}{6.5in}
    \centering
    \raisebox{-0.5\height}{\includegraphics[scale=0.4]{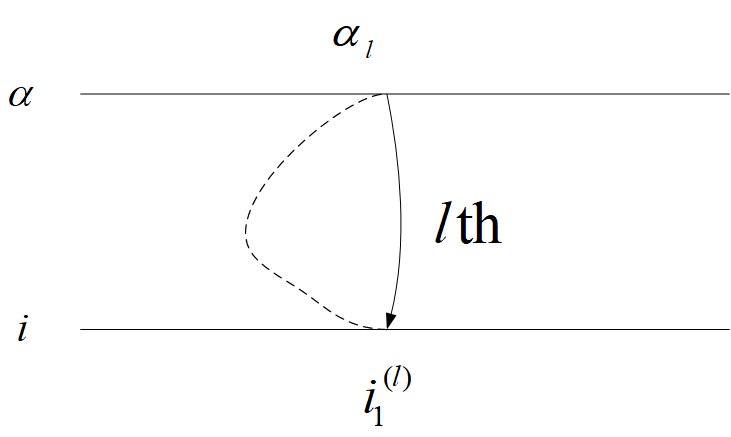}}
    \quad \raisebox{-0.5\height}{\mbox{$\boldsymbol{\longrightarrow}$}}\qquad
    \raisebox{-0.5\height}{\includegraphics[scale=0.4]{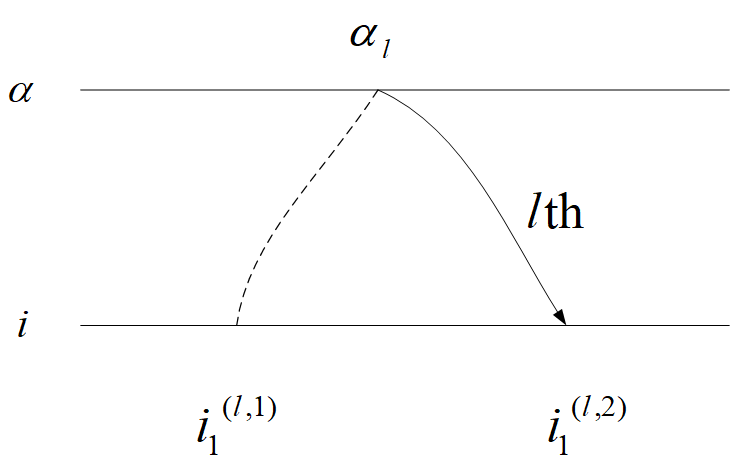}}
  \end{minipage}
  \caption{A graph in the first class of  $\Delta_3(p,s;\alpha)$
    (left column) and corresponding $\Delta_1(p,s;\alpha)$  graph
    (right column). The dashed line describes the existence of a cycle.\label{fig:Delta3_class_1}}
\end{figure}

The second class consists of graphs whose subgraph with first $l-1$ up edges and first $l$ down edges has no cycle when orientation is removed and multiple edges are identified, but has a cycle when adding the $l$-th up edge. Thus, there exists a $1 \le j < l$, such that $\alpha_{l+1} = \alpha_j$ and the $j$-th down edge belongs to the cycle. Since the subgraph with first $l-1$ up edges and first $l$ down edges has no cycle, we have $\alpha_u \neq \alpha_j$ for all $j < u < l+1$. By Lemma \ref{lemma-abab}, the first $j-1$ pairs as well as the last $p-l$ pairs of down and up edges never visit the vertex $\alpha_u$ for all $j < u < l+1$. Hence, along the path from $i_1^{(j)}$ to $i_1^{(l)}$, all down edges are paired with exactly one up edge. Note that along the path from $\alpha_j$ to $i_1^{(l)}$, there must be a single edge, which can only be $(\alpha_j, i_1^{(j)})$. If $i_1^{(j)} \neq i_1^{(l)}$, then in the subgraph of path from $\alpha_j$ to $\alpha_{l+1}$, the degree of the vertices $i_1^{(j)}$ and $i_1^{(l)}$ are both odd number. This is impossible. However, $i_1^{(j)} = i_1^{(l)}$ implies that the $l$-th up edge $(i_1^{(l)}, \alpha_{l+1})$ coincide with the $j$-th down edge $(\alpha_j, i_1^{(j)})$. This contradicts to the assumption that the subgraph with first $l$ pairs of down and up edges has a cycle but has no cycle if deleting the $l$-th up edges, when the orientation is removed and multiple edges are identified. Thus, the second class is empty.

Therefore, for any graphs in $\Delta_3(p,s;\alpha)$, one can split an appropriate vertex on the sequence $i_1$ to get a graph in $\Delta_1(p,s;\alpha)$. Equivalently, any graphs in $\Delta_3(p,s;\alpha)$ can be obtained from graphs in $\Delta_1(p,s;\alpha)$ by gluing two vertices on the sequence $i_1$ without common neighborhood. Moreover, gluing different pairs of vertices on the sequence $i_1$ leads to different graphs in $\Delta_3(p,s;\alpha)$.

For fixed $\alpha \in \cC_{s,p}^{(1)}$, by Lemma \ref{lemma-Bai}, there exists a unique sequence $i_1$, such that the graph $g(i_1,\alpha) \in \Delta_1(p,s;\alpha)$. Moreover, the sequence $i_1$ has $p+1-s$ different vertices. Hence, the choice to glue two vertices on the sequence $i_1$ is $\binom{p+1-s}{2}$. By Lemma \ref{lemma-Delta_4}, the choice to glue two vertices on the sequence $i_1$ that has different neighborhoods is given by \eqref{eq-Delta_3 number}.
\end{proof}

By Lemma \ref{lemma-Delta_4} ,\ref{lemma-Delta_3} and \eqref{eq-d>0-moment-2.2}, we have
\begin{align} \label{eq-2.6}
	& \dfrac{1}{n^k} \bE \big[ \Tr M_{n,k,m}^p \big] \nonumber \\
	=& \sum_{s=1}^p \left( \dfrac{m}{n^k} \right)^s (1+o(1)) \sum_{\alpha \in \cC_{s,p}^{(1)}} \left( \sum_{\varphi \in \cI_{s,m}} \prod_{t=1}^s \dfrac{1}{m} \tau_{\varphi(t)}^{\deg_t(\alpha)} \right) \nonumber \\
	& \left[ 1 - \dfrac{(p+1-s)(p-s)}{2n} + \dfrac{1}{n} \left( \binom{p+1-s}{2} - \sum_{t=1}^s \binom{\deg_t(\alpha)}{2} + m_4 \sum_{t=1}^s \binom{\deg_t(\alpha)}{2} \right) \right]^k \nonumber \\
	=& \sum_{s=1}^p \left( \dfrac{m}{n^k} \right)^s (1+o(1)) \sum_{\alpha \in \cC_{s,p}^{(1)}} \left( \sum_{\varphi \in \cI_{s,m}} \prod_{t=1}^s \dfrac{1}{m} \tau_{\varphi(t)}^{\deg_t(\alpha)} \right) \left[ 1 + \dfrac{m_4-1}{n} \sum_{t=1}^s \binom{\deg_t(\alpha)}{2} \right]^k \nonumber \\
	=& \sum_{s=1}^p \left( \dfrac{m}{n^k} \right)^s (1+o(1)) \sum_{\alpha \in \cC_{s,p}^{(1)}} \prod_{t=1}^s \left( \dfrac{1}{m} \sum_{j=1}^m \tau_j^{\deg_t(\alpha)} \right) \left[ 1 + \dfrac{m_4-1}{n} \sum_{t=1}^s \binom{\deg_t(\alpha)}{2} \right]^k,
\end{align}
where we use the following equality in the last equality
\begin{align*}
	\sum_{\varphi \in \cI_{s,m}} \prod_{t=1}^s \dfrac{1}{m} \tau_{\varphi(t)}^{\deg_t(\alpha)}
	= \sum_{j_1, \ldots, j_s=1 \atop \forall p \neq q, j_p \neq j_q}^m \prod_{t=1}^s \dfrac{1}{m} \tau_{j_t}^{\deg_t(\alpha)}
	= \prod_{t=1}^s \left( \dfrac{1}{m} \sum_{j=1}^m \tau_j^{\deg_t(\alpha)} \right) (1+o(1)).
\end{align*}
Letting $m,n,k \to \infty$ in \eqref{eq-2.6} with the ratio \eqref{eq-def-ratio} and the limit assumption \eqref{eq-limit of tau}, we have
\begin{align} 
	\lim_{n \to \infty} \dfrac{1}{n^k} \bE \big[ \Tr M_{n,k,m}^p \big]
	=& \sum_{s=1}^p c^s \sum_{\alpha \in \cC_{s,p}^{(1)}} \left( \prod_{t=1}^s m_{\deg_t(\alpha)}^{(\tau)} \right) \exp \left( d(m_4-1) \sum_{t=1}^s \binom{\deg_t(\alpha)}{2} \right).
\end{align}


%
\subsection{Proof of (ii)}\label{sec:part(ii)}

For any $p \in \bN$, we compute the variance of the moment
\begin{align} \label{eq-3.1-variance}
	\Var \left( \dfrac{1}{n^k} \Tr M_{n,k,m}^p \right)
	= \dfrac{1}{n^{2k}} \bE \left[ \big( \Tr M_{n,k,m}^p \big)^2 \right]
	- \dfrac{1}{n^{2k}} \left( \bE \left[ \Tr M_{n,k,m}^p \right] \right)^2.
\end{align}
We use the convention that $\bi^{(p+1)} = \bi^{(1)}$ and $\j^{(p+1)} = \j^{(1)}$. Recalling the model \eqref{eq:matrix}, we can write
\begin{align*}
    & \dfrac{1}{n^{2k}} \bE \left[ \big( \Tr M_{n,k,m}^p \big)^2 \right] \nonumber \\
	=& \dfrac{1}{n^{2k}} \sum_{\alpha_1, \ldots, \alpha_p = 1}^m \sum_{\beta_1, \ldots, \beta_p = 1}^m \left( \prod_{t=1}^p \tau_{\alpha_t} \tau_{\beta_t} \right)
	\bE \Big[ \Tr \big( Y_{\alpha_1} Y_{\alpha_1}^* \cdots Y_{\alpha_p} Y_{\alpha_p}^* \big) \Tr \big( Y_{\beta_1} Y_{\beta_1}^* \cdots Y_{\beta_p} Y_{\beta_p}^* \big) \Big] \nonumber \\
	=& \dfrac{1}{n^{2k}} \sum_{\alpha_1, \ldots, \alpha_p = 1}^m \sum_{\beta_1, \ldots, \beta_p = 1}^m \left( \prod_{t=1}^p \tau_{\alpha_t} \tau_{\beta_t} \right)
	\bE \Bigg[ \sum_{\bi^{(1)}, \ldots, \bi^{(p)} = 1}^{n^k} \sum_{\j^{(1)}, \ldots, \j^{(p)} = 1}^{n^k} \prod_{t=1}^p Y_{\bi^{(t)} \alpha_t} \overline{Y_{\bi^{(t+1)} \alpha_t}} Y_{\j^{(t)} \beta_t} \overline{Y_{\j^{(t+1)} \beta_t}} \Bigg] \nonumber \\
	=& \dfrac{1}{n^{2k}} \sum_{\alpha_1, \ldots, \alpha_p = 1}^m \sum_{\beta_1, \ldots, \beta_p = 1}^m \left( \prod_{t=1}^p \tau_{\alpha_t} \tau_{\beta_t} \right) \nonumber \\
	& \times \bE \Bigg[ \sum_{\bi^{(1)}, \ldots, \bi^{(p)} = 1}^{n^k} \sum_{\j^{(1)}, \ldots, \j^{(p)} = 1}^{n^k} \prod_{t=1}^p \prod_{l=1}^k \bigg( \big( \y_{\alpha_t}^{(l)} \big)_{i^{(t)}_l} \overline{\big( \y_{\alpha_t}^{(l)} \big)_{i^{(t+1)}_l}} \big( \y_{\beta_t}^{(l)} \big)_{j^{(t)}_l} \overline{\big( \y_{\beta_t}^{(l)} \big)_{j^{(t+1)}_l}} \bigg) \Bigg].
\end{align*}
The terms in the last line can be factored in the following way:
\begin{align} \label{eq-3.2-second moment}
	& \dfrac{1}{n^{2k}} \bE \left[ \big( \Tr M_{n,k,m}^p \big)^2 \right] \nonumber \\
	=& \dfrac{1}{n^{2k}} \sum_{\alpha_1, \ldots, \alpha_p = 1}^m \sum_{\beta_1, \ldots, \beta_p = 1}^m \left( \prod_{t=1}^p \tau_{\alpha_t} \tau_{\beta_t} \right) \nonumber \\
	&\times \bE \Bigg[ \prod_{l=1}^k \sum_{i_l^{(1)}, \ldots, i_l^{(p)} = 1}^n \sum_{j_l^{(1)}, \ldots, j_l^{(p)} = 1}^n \prod_{t=1}^p \bigg( \big( \y_{\alpha_t}^{(l)} \big)_{i^{(t)}_l} \overline{\big( \y_{\alpha_t}^{(l)} \big)_{i^{(t+1)}_l}} \big( \y_{\beta_t}^{(l)} \big)_{j^{(t)}_l} \overline{\big( \y_{\beta_t}^{(l)} \big)_{j^{(t+1)}_l}} \bigg) \Bigg] \nonumber \\
	=& \dfrac{1}{n^{2k}} \sum_{\alpha_1, \ldots, \alpha_p = 1}^m \sum_{\beta_1, \ldots, \beta_p = 1}^m \left( \prod_{t=1}^p \tau_{\alpha_t} \tau_{\beta_t} \right) \nonumber \\
	&\times \left( \bE \left[ \sum_{i_1^{(1)}, \ldots, i_1^{(p)} = 1}^n \sum_{j_1^{(1)}, \ldots, j_1^{(p)} = 1}^n \prod_{t=1}^p \bigg( \big( \y_{\alpha_t}^{(1)} \big)_{i^{(t)}_1} \overline{\big( \y_{\alpha_t}^{(1)} \big)_{i^{(t+1)}_1}} \big( \y_{\beta_t}^{(1)} \big)_{j^{(t)}_1} \overline{\big( \y_{\beta_t}^{(1)} \big)_{j^{(t+1)}_1}} \bigg) \right] \right)^k.
\end{align}
Here, we use the i.i.d. setting in the last equality. By \eqref{eq-moment-0.2}, we have
\begin{align} \label{eq-3.3-E[]^2}
	& \dfrac{1}{n^{2k}} \left( \bE \big[ \Tr M_{n,k,m}^p \big] \right)^2 \nonumber \\
	=& \dfrac{1}{n^{2k}} \sum_{\alpha_1, \ldots, \alpha_p = 1}^m \sum_{\beta_1, \ldots, \beta_p = 1}^m \left( \prod_{t=1}^p \tau_{\alpha_t} \tau_{\beta_t} \right) \nonumber \\
	&\times \left( \bE \left[ \sum_{i_1^{(1)}, \ldots, i_1^{(p)} = 1}^n \prod_{t=1}^p \bigg( \big( \y_{\alpha_t}^{(1)} \big)_{i^{(t)}_1} \overline{\big( \y_{\alpha_t}^{(1)} \big)_{i^{(t+1)}_1}} \bigg) \right] \bE \left[ \sum_{j_1^{(1)}, \ldots, j_1^{(p)} = 1}^n \prod_{t=1}^p \bigg( \big( \y_{\beta_t}^{(1)} \big)_{j^{(t)}_1} \overline{\big( \y_{\beta_t}^{(1)} \big)_{j^{(t+1)}_1}} \bigg) \right] \right)^k.
\end{align}
Substitute \eqref{eq-3.2-second moment} and \eqref{eq-3.3-E[]^2} to \eqref{eq-3.1-variance}, we have
\begin{align} \label{eq-3.4-variance}
	& \Var \left( \dfrac{1}{n^k} \Tr M_{n,k,m}^p \right) \nonumber \\
	=& \dfrac{1}{n^{2k}} \sum_{\alpha_1, \ldots, \alpha_p = 1}^m \sum_{\beta_1, \ldots, \beta_p = 1}^m \left( \prod_{t=1}^p \tau_{\alpha_t} \tau_{\beta_t} \right) \nonumber \\
	&\times \left\{ \left( \bE \left[ \sum_{i_1^{(1)}, \ldots, i_1^{(p)} = 1}^n \sum_{j_1^{(1)}, \ldots, j_1^{(p)} = 1}^n \prod_{t=1}^p \bigg( \big( \y_{\alpha_t}^{(1)} \big)_{i^{(t)}_1} \overline{\big( \y_{\alpha_t}^{(1)} \big)_{i^{(t+1)}_1}} \big( \y_{\beta_t}^{(1)} \big)_{j^{(t)}_1} \overline{\big( \y_{\beta_t}^{(1)} \big)_{j^{(t+1)}_1}} \bigg) \right] \right)^k \right. \nonumber \\
	& \left. - \left( \bE \left[ \sum_{i_1^{(1)}, \ldots, i_1^{(p)} = 1}^n \prod_{t=1}^p \bigg( \big( \y_{\alpha_t}^{(1)} \big)_{i^{(t)}_1} \overline{\big( \y_{\alpha_t}^{(1)} \big)_{i^{(t+1)}_1}} \bigg) \right]
	\bE \left[ \sum_{j_1^{(1)}, \ldots, j_1^{(p)} = 1}^n \prod_{t=1}^p \bigg( \big( \y_{\beta_t}^{(1)} \big)_{j^{(t)}_1} \overline{\big( \y_{\beta_t}^{(1)} \big)_{j^{(t+1)}_1}} \bigg) \right] \right)^k \right\}.
\end{align}
Denote the sequences $\alpha = (\alpha_1, \ldots, \alpha_p)$, $\beta = (\beta_1, \ldots, \ldots, \beta_p)$, $i_1 = (i_1^{(1)}, \ldots, i_1^{(p)})$ and $j_1 = (j_1^{(1)}, \ldots, j_1^{(p)})$. We use the notation $\alpha \cap \beta = \emptyset$ if the two sequences $\alpha$ and $\beta$ have no common vertices. Note that when $\alpha \cap \beta = \emptyset$, the two random variables
\begin{align*}
	\prod_{t=1}^p \bigg( \big( \y_{\alpha_t}^{(1)} \big)_{i^{(t)}_1} \overline{\big( \y_{\alpha_t}^{(1)} \big)_{i^{(t+1)}_1}} \bigg),
	~ \mathrm{and} ~
	\prod_{t=1}^p \bigg( \big( \y_{\beta_t}^{(1)} \big)_{j^{(t)}_1} \overline{\big( \y_{\beta_t}^{(1)} \big)_{j^{(t+1)}_1}} \bigg)
\end{align*}
are independent, for any sequences $i_1, j_1 \in [n]^p$. Hence, \eqref{eq-3.4-variance} can be written as
\begin{align} \label{eq-3.5-variance}
	& \Var \left( \dfrac{1}{n^k} \Tr M_{n,k,m}^p \right) \nonumber \\
	=& \dfrac{1}{n^{2k}} \sum_{\alpha, \beta \in [m]^p, \alpha \cap \beta \neq \emptyset} \left( \prod_{t=1}^p \tau_{\alpha_t} \tau_{\beta_t} \right) \nonumber \\
	&\times \left\{ \left( \sum_{i_1, j_1 \in [n]^p} \bE \left[ \prod_{t=1}^p \bigg( \big( \y_{\alpha_t}^{(1)} \big)_{i^{(t)}_1} \overline{\big( \y_{\alpha_t}^{(1)} \big)_{i^{(t+1)}_1}} \big( \y_{\beta_t}^{(1)} \big)_{j^{(t)}_1} \overline{\big( \y_{\beta_t}^{(1)} \big)_{j^{(t+1)}_1}} \bigg) \right] \right)^k \right. \nonumber \\
	& \left. - \left( \sum_{i_1, j_1 \in [n]^p} \bE \left[ \prod_{t=1}^p \bigg( \big( \y_{\alpha_t}^{(1)} \big)_{i^{(t)}_1} \overline{\big( \y_{\alpha_t}^{(1)} \big)_{i^{(t+1)}_1}} \bigg) \right] \bE \left[ \prod_{t=1}^p \bigg( \big( \y_{\beta_t}^{(1)} \big)_{j^{(t)}_1} \overline{\big( \y_{\beta_t}^{(1)} \big)_{j^{(t+1)}_1}} \bigg) \right] \right)^k \right\} \nonumber \\
	=& \dfrac{1}{n^{2k}} \sum_{\alpha, \beta \in [m]^p, \alpha \cap \beta \neq \emptyset} \left( \prod_{t=1}^p \tau_{\alpha_t} \tau_{\beta_t} \right) \nonumber \\
	&\times \left\{ \left( \sum_{i_1, j_1 \in [n]^p} E'(i_1,\alpha;j_1,\beta) \right)^k - \left( \sum_{i_1, j_1 \in [n]^p} E(i_1,\alpha) E(j_1,\beta) \right)^k \right\},
\end{align}
where $E(\cdot,\cdot)$ is defined in \eqref{eq-def-E(i_1,alpha)}, and $E'(i_1,\alpha;j_1,\beta)$ is given by
\begin{align*}
	E'(i_1,\alpha;j_1,\beta)
	= \bE \left[ \prod_{t=1}^p \bigg( \big( \y_{\alpha_t}^{(1)} \big)_{i^{(t)}_1} \overline{\big( \y_{\alpha_t}^{(1)} \big)_{i^{(t+1)}_1}} \big( \y_{\beta_t}^{(1)} \big)_{j^{(t)}_1} \overline{\big( \y_{\beta_t}^{(1)} \big)_{j^{(t+1)}_1}} \bigg) \right].
\end{align*}

We use the notation $g(i_1,\alpha) \cup g(j_1,\beta)$ for the graph obtained from $g(i_1,\alpha)$ and $g(j_1,\beta)$ by joining two graphs together and keep the coincident edges. If the graph $g(i_1,\alpha) \cup g(j_1,\beta)$ has a single edge, then this single edge must belong to one of the graphs $g(i_1,\alpha)$ and $g(j_1,\beta)$. Since the variables have mean zero, only the summation indices $i_1, j_1$ such that the graph $g(i_1,\alpha) \cup g(j_1,\beta)$ has no single edge contribute to \eqref{eq-3.5-variance}. Note that the graph $g(i_1,\alpha) \cup g(j_1,\beta)$ is connected with $4p$ edges, the degrees of the vertices is at least $4$ with at most two exceptions, whose degrees are at least $2$. Denote $s = |(\alpha, \beta)|$ and $r = |(i_1, j_1)|$, then the term $E'(i_1,\alpha;j_1,\beta)$ is non-vanishing when $r+s \le 2p+1$.

Besides, if $\alpha \cap \beta \neq \emptyset$, then $s \le |\alpha| + |\beta| - 1$. Hence, the term $E(i_1,\alpha) E(j_1, \beta)$ is non-vanishing when $|i_1| + |\alpha| \le p+1$ and $|j_1| + |\beta| \le p+1$, which implies that $r+s \le |i_1| + |j_1| + (|\alpha| + |\beta| -1) \le 2p+1$. Hence, for $n$ large,
\begin{align} \label{eq-3.6-variance}
	& \Var \left( \dfrac{1}{n^k} \Tr M_{n,k,m}^p \right) \nonumber \\
	=& \dfrac{1}{n^{2k}} \sum_{s=1}^{2p} \sum_{(\alpha, \beta) \in \cC_{s,2p}, \alpha \cap \beta \neq \emptyset} \left( \sum_{\varphi \in \cI_{s,m}} \prod_{t=1}^p \tau_{\varphi(\alpha_t)} \tau_{\varphi(\beta_t)} \right) \nonumber \\
	&\times \left\{ \left( \sum_{r=1}^{2p} n^r(1+O(n^{-1})) \sum_{(i_1, j_1) \in \cC_{r,2p}} E'(i_1,\alpha;j_1,\beta) \right)^k \right. \nonumber \\
	&\qquad \left. - \left( \sum_{r=1}^{2p} n^r(1+O(n^{-1})) \sum_{(i_1, j_1) \in \cC_{r,2p}} E(i_1,\alpha) E(j_1,\beta) \right)^k \right\} \nonumber \\
	\le& \dfrac{1}{n^{2k}} \sum_{s=1}^{2p} \sum_{(\alpha, \beta) \in \cC_{s,2p}, \alpha \cap \beta \neq \emptyset} \left( \sum_{\varphi \in \cI_{s,m}} \prod_{t=1}^s \tau_{\varphi(t)}^{\deg_t(\alpha) + \deg_t(\beta)} \right) \times C_p^k n^{(1-s)k} \nonumber \\
	=& \dfrac{C_p^k}{n^k} \sum_{s=1}^{2p} \left( \dfrac{m}{n^k} \right)^s \sum_{(\alpha, \beta) \in \cC_{s,2p}, \alpha \cap \beta \neq \emptyset} \left( \sum_{\varphi \in \cI_{s,m}} \prod_{t=1}^s \dfrac{1}{m} \tau_{\varphi(t)}^{\deg_t(\alpha) + \deg_t(\beta)} \right) \nonumber \\
	\le& \dfrac{C_p^k}{n^k} \sum_{s=1}^{2p} \left( \dfrac{m}{n^k} \right)^s \sum_{(\alpha, \beta) \in \cC_{s,2p}, \alpha \cap \beta \neq \emptyset} \prod_{t=1}^s \left( \dfrac{1}{m} \sum_{j=1}^m |\tau_j|^{\deg_t(\alpha) + \deg_t(\beta)} \right) \nonumber \\
	\le& \dfrac{C_p^k}{n^k}.
\end{align}
where $C_p$ is a positive number that depends only on $p$ and may varies in different places. Therefore, for all fixed $p \in \bN_+$, if $k \ge 2$, we have
\begin{align*}
	\sum_{n=1}^{\infty} \Var \left( \dfrac{1}{n^k} \Tr M_{n,k,m}^p \right) < \infty.
\end{align*}

\appendix

\section{The case \texorpdfstring{$d=0$}{d=0}.}
\label{sec:d=0}

The variance calculation (ii) is identical to the one given in the proof of Theorem~\ref{th:moment}, see Section~\ref{sec:part(ii)}. It remains to establish the moment limit (i).

Note that the moment formula \eqref{eq-moment-0.4} is also valid for the case $d=0$. We use the graph $g(i_1,\alpha)$ to compute the sum on $i_1$ in \eqref{eq-moment-0.4}. For any sequences $\alpha \in \cC_{s,p}$ and $i_1 \in \cC_{r,p}$ satisfying $g(i_1,\alpha) \in \Delta_1(p,s;\alpha)$, we have $r+s=p+1$ and $E(i_1,\alpha) = n^{-p}$. Besides, $E(i_1,\alpha)=0$ if the graph $g(i_1,\alpha) \in \Delta_2(p,r,s;\alpha)$. Moreover, if $g(i_1,\alpha) \notin \Delta_1(p,s;\alpha) \cup \Delta_2(p,r,s;\alpha)$, then $r+s \le p$ and $E(i_1,\alpha) = O(n^{-p})$. Hence,
\begin{align} \label{eq-d=0-sum i_1}
	& \sum_{r=1}^p n \cdots (n-r+1) \sum_{i_1 \in \cC_{r,p}} E(i_1,\alpha) \nonumber \\
	=& \sum_{r=1}^p n^r \big( 1+O(n^{-1}) \big) \sum_{i_1 \in \cC_{r,p}} E(i_1,\alpha) \nonumber \\
	=& \sum_{r=1}^p n^r \big( 1+O(n^{-1}) \big) \sum_{i_1 \in \cC_{r,p} \atop g(i_1,\alpha) \in \Delta_1(p,s;\alpha)} E(i_1,\alpha) \nonumber \\
	&+ \sum_{r=1}^p n^r (1+O(n^{-1})) \sum_{i_1 \in \cC_{r,p} \atop g(i_1,\alpha) \notin \Delta_1(p,s;\alpha) \cup \Delta_2(p,r,s;\alpha)} E(i_1,\alpha) \nonumber \\
	=& n^{1-s} \big( 1+O(n^{-1}) \big) \sum_{i_1 \in \cC_{p+1-s,p} \atop g(i_1,\alpha) \in \Delta_1(p,s;\alpha)} 1
	+ O(n^{-s}).
\end{align}

Therefore, by \eqref{eq-moment-0.4}, \eqref{eq-d=0-sum i_1} and Lemma \ref{lemma-Bai}, we have
\begin{align} \label{eq-d=0-moment}
	\dfrac{1}{n^k} \bE \big[ \Tr M_{n,k,m}^p \big]
	=& \dfrac{1}{n^k} \sum_{s=1}^p \sum_{\alpha \in \cC_{s,p}} \left( \sum_{\varphi \in \cI_{s,m}} \prod_{t=1}^p \tau_{\varphi(\alpha_t)} \right) \left( n^{1-s} \big( 1+O(n^{-1}) \big) \sum_{i_1 \in \cC_{p+1-s,p} \atop g(i_1,\alpha) \in \Delta_1(p,s;\alpha)} 1 + O(n^{-s}) \right)^k \nonumber \\
	=& \dfrac{1}{n^k} \sum_{s=1}^p \sum_{\alpha \in \cC_{s,p}^{(1)}} \left( \sum_{\varphi \in \cI_{s,m}} \prod_{t=1}^p \tau_{\varphi(\alpha_t)} \right) \Big( n^{1-s} \big( 1+O(n^{-1}) \big) + O(n^{-s}) \Big)^k \nonumber \\
	& + \dfrac{1}{n^k} \sum_{s=1}^p \sum_{\alpha \in \cC_{s,p} \setminus \cC_{s,p}^{(1)}} \left( \sum_{\varphi \in \cI_{s,m}} \prod_{t=1}^p \tau_{\varphi(\alpha_t)} \right) \big( O(n^{-s}) \big)^k \nonumber \\
	=& \dfrac{1}{n^k} \sum_{s=1}^p \sum_{\alpha \in \cC_{s,p}^{(1)}} \left( \sum_{\varphi \in \cI_{s,m}} \prod_{t=1}^p \tau_{\varphi(\alpha_t)} \right) \times n^{k(1-s)} (1+o(1)) \nonumber \\
	=& \sum_{s=1}^p \left( \dfrac{m}{n^k} \right)^s \sum_{\alpha \in \cC_{s,p}^{(1)}} \left( \dfrac{1}{m^s} \sum_{\varphi \in \cI_{s,m}} \prod_{t=1}^p \tau_{\varphi(\alpha_t)} \right) (1+o(1)) \nonumber \\
	=& \sum_{s=1}^p \left( \dfrac{m}{n^k} \right)^s \sum_{\alpha \in \cC_{s,p}^{(1)}} \left( \prod_{t=1}^s \left( \dfrac{1}{m} \sum_{j=1}^m \tau_j^{\deg_t(\alpha)} \right) \right) (1+o(1)),
\end{align}
where we use $k = o(n)$ in the third equality. Letting $m,n,k \to \infty$ with the ratio \eqref{eq-def-ratio}, we have
\begin{align*}
	\lim_{n \to \infty} \dfrac{1}{n^k} \bE \big[ \Tr M_{n,k,m}^p \big]
	= \sum_{s=1}^p c^s \sum_{\alpha \in \cC_{s,p}^{(1)}} \left( \prod_{t=1}^s m_{\deg_t(\alpha)}^{(\tau)} \right).
\end{align*}

\begin{remark}
In the case $\tau_1 = \tau_2 = \cdots = 1$, by Lemma \ref{lemma-Bai}, we have
\begin{align*}
	\lim_{n \to \infty} \dfrac{1}{n^k} \bE \big[ \Tr M_{n,k,m}^p \big]
	= \sum_{s=1}^p \dfrac{1}{p} \binom{p}{s-1} \binom{p}{s} c^s,
\end{align*}
which is the $p$-th moment of the Marchenko-Pastur law.
\end{remark}

Acknowledgments: 
BC was partially supported by JSPS Kakenhi 17H04823, 20K20882, 21H00987, JPJSBP120203202.

\bibliographystyle{plain}
\bibliography{tensor}

\end{document}